\theoremstyle{plain}
\newtheorem{theorem}{Theorem}
\newtheorem{lemma}{Lemma}
\newtheorem{proposition}{Proposition}
\newtheorem{corollary}{Corollary}
\theoremstyle{definition}
\newtheorem{definition}{Definition}
\newtheorem{claim}{Claim}
\numberwithin{equation}{section}
\newtheorem{remark}{Remark}
\newcommand{\hollowstar}{\text{\ding{78}}}
\newcommand{\chinastar}{\text{\ding{75}}}
\journal{Information Sciences}
\begin{document}

\begin{frontmatter}
\title{Revisiting type-2 triangular norms on normal convex fuzzy truth values\tnoteref{mytitlenote}}
\tnotetext[mytitlenote]{This work was supported by the Natural Science
Foundation of Sichuan Province (No. 2022NSFSC1821), the National Natural Science Foundation of China
(No. 11601449), and the Key Natural Science Foundation of Universities in Guangdong Province
(No. 2019KZDXM027).}

\author[a0,a1]{Xinxing Wu\corref{mycorrespondingauthor}}
\cortext[mycorrespondingauthor]{Corresponding author}
\address[a0]{School of Mathematics and Statistics, Guizhou University of Finance and Economics, Guiyang,
Guizhou 550025, China}
\address[a1]{Zhuhai College of Science and Technology, Zhuhai, Guangdong 519041, China}
\ead{wuxinxing5201314@163.com}

\author[a2]{Zhiyi Zhu}
\address[a2]{School of Sciences, Southwest Petroleum University, Chengdu, Sichuan 610500, China}
\ead{zhuzhiyi2019@163.com}

\author[a3]{Guanrong Chen}
\address[a3]{Department of Electrical Engineering, City University of Hong Kong,
Hong Kong SAR, China}
\ead{gchen@ee.cityu.edu.hk}

\begin{abstract}
This paper studies t-norms on the space $\mathbf{L}$ of all normal and convex fuzzy truth values.
We first prove that the only non-convolution form type-2 t-norm constructed by Wu et al. satisfies
the distributivity law for meet-convolution and show that t-norm in the sense of Walker and Walker
is strictly stronger than t$_{r}$-norm on $\mathbf{L}$, which is strictly stronger than t-norm on
$\mathbf{L}$. Furthermore, we characterize some restrictive axioms of t$_{r}$-norms for convolution
operations on $\mathbf{L}$ and obtain some necessary conditions for t$_{r}$-(co)norm convolution
operations on $\mathbf{L}$.
\end{abstract}
\begin{keyword}
Fuzzy truth values, triangular norm (t-norm).
\end{keyword}

\end{frontmatter}

\section{Introduction}
Fuzzy truth values are truth values of type-2 fuzzy sets introduced by Zadeh~\cite{Z1975}, which map the unit
interval $I=[0, 1]$ to itself. It is well known that the numerical truth values form a Kleene algebra under the
minimum $\wedge$ and maximum $\vee$ operations~\cite{M1981}, and interval truth values, triangle truth values,
and normal and convex fuzzy truth values form De Morgan algebras under meet-convolution $\sqcap$ and
join-convolution $\sqcup$ \cite{M1994}. The systematic research for the algebraic structures of fuzzy truth
values began with the work of Walker and Walker \cite{WW2005} in 2005. Since then, it has attracted many scholars
and been extensively studied \cite{EM1999,HWW2016,HWW2008,HWW2010,HWW2015,T2009,WW2006b,WW2008b,WW2008}. Another
interesting research for fuzzy truth values focuses on operations on it, including triangular (co)norms
\cite{KM2002,Li2015,LW2020-2,S2009,WW2006,Zhang2016}, t$_{r}$-(co)norms \cite{HCT2015,WCW2022,WC-TFS},
uninorms and nullnorms \cite{WL2020,X2018}, Zadeh's extension \cite{LW2020}, convolution operations 
\cite{DBD2018,ZH2020-2,ZH2020}, negation \cite{CTH2020,TCH2018}, and aggregation \cite{T2014,W2015,W2015-2}.

Recently, Hern\'{a}ndez et al.~\cite{HCT2015} introduced two binary operations $\widetilde{\Delta}_{\ast}$
and $\widetilde{\nabla}_{\ast}$ (see Definition~\ref{HCT-Def}) to generalize meet-convolution and join-convolution
on fuzzy truth values and proved that the binary operation $\widetilde{\Delta}_{\ast}$ (resp., $\widetilde{\nabla}_{\ast}$) is a t$_{r}$-norm (resp., a t$_{r}$-conorm) on normal and convex fuzzy truth
values under some additional conditions. Then, Wu and Chen~\cite{WC-TFS} answered negatively an open problem
posed by Hern\'{a}ndez et al.~\cite{HCT2015}, proving that the binary operation $\ast$, which ensures
that $\widetilde{\Delta}_{\ast}$ is a t$_{r}$-norm or $\widetilde{\nabla}_{\ast}$ is a t$_{r}$-conorm, is a
t-norm on $I$. Moreover, they~\cite{WCW2022} constructed a t$_{r}$-norm and a t$_{r}$-conorm, which cannot
be obtained by the formulas of `$\widetilde{\Delta}_{\ast}$' and `$\widetilde{\nabla}_{\ast}$'. This answered
another open problem posed by Hern\'{a}ndez et al.~\cite{HCT2015}. Zhang and Hu~\cite{ZH2020-2,ZH2020}
investigated the distributivity laws of convolution operations over meet-convolution and join-convolution
and characterized the idempotency of convolution operations on fuzzy truth values. De Miguel et al.
\cite{DBD2018} proposed two convolution operations on the set of functions between two bounded lattices and
studied their algebraic structure. Liu and Wang \cite{LW2020,LW2020-2} introduced Z-extended overlap functions
and grouping functions on fuzzy truth values according to Zadeh's extension principle and studied the
distributivity laws between the Z-extended overlap functions and grouping functions and between extended
t-norms and t-conorms on fuzzy truth values. Zhang and Wang \cite{ZW2018} obtained a necessary and sufficient
condition ensuring the absorption laws in the algebra of fuzzy truth values. By applying automorphism,
Cubillo et al. \cite{CTH2020} obtained an equivalent characterization for the strong negations on norm and
convex fuzzy truth values with a fixed point $\bm{1}$ ($\bm{1}(x)=1$ for all $x\in I$).

Inspired by the above research progress, in this paper we further study
t-norms for normal and convex fuzzy truth values. We first obtain the implication relations
among three notions of t-norms for fuzzy truth values, proving that
t$_{lor}$-norm on $\mathbf{L}$ (normal and convex fuzzy truth values) is strictly stronger
than t$_{r}$-norm on $\mathbf{L}$, which is strictly stronger than
t-norm on $\mathbf{L}$. Moreover, we characterize the
restrictive axioms (O5), (O5'), and (O6) (see Definition~\ref{Def-1})
for the binary operations $\widetilde{\Delta}_{\ast}$ and $\widetilde{\nabla}_{\ast}$. In particular, we prove
that if the binary operation ${\widetilde{\Delta}_{\ast}}$ is a t$_{r}$-norm (resp., ${\widetilde{\nabla}_{\ast}}$ is a t$_{r}$-conorm) on $\mathbf{L}$,
then ${\Delta}$ is a continuous t-norm (resp., ${\nabla}$ is a continuous t-conorm) on $I$, and
${\ast}$ is a t-norm on $I$, improving the main results in \cite{WC-TFS}.

\section{Preliminaries}\label{S-2}
In this section, we recall some basic concepts and essential
requirements which are used in the sequel.

\subsection{Basic operations and orders on fuzzy truth values}

Throughout this paper, let $Map(X, Y)$ be the set of all mappings from $X$ to $Y$, and `$\leq$'
denote the usual order relation in the lattice of real numbers, $\mathbf{M}=Map(I, I)$.
In the context of type-2 fuzzy sets, the members of $\mathbf{M}$ are sometimes called
{\it fuzzy truth values} or {\it membership grades} of type-2 fuzzy sets.
Let $\vee$ and $\wedge$ be the {\color{blue}maximum and minimum operations, respectively, on the lattice
$\mathbb{R}$ of real numbers}.

\begin{definition}[{\textrm{\protect\cite{WW2005}}}]
A function $f\in \mathbf{M}$ is
\begin{enumerate}[(1)]
  \item {\it normal} if $\sup\{f(x)\mid x\in I\}=1$;
  \item {\it convex} if, for any $0\leq x\leq y\leq z\leq 1$, $f(y)\geq f(x)\wedge f(z)$.
\end{enumerate}
\end{definition}


For any subset $B$ of $X$, a special fuzzy set $\bm{1}_{B}$,
called the {\it characteristic function}
of $B$, is defined by
$$
\bm{1}_{B}(x)=\begin{cases}
1, & x\in B, \\
0, & x\in X\backslash B.
\end{cases}
$$

Let $\mathbf{N}=\{f\in \mathbf{M}\mid f \text{ is normal}\}$,
$\mathbf{C}=\{f\in \mathbf{M}\mid f \text{ is convex}\}$,
$\mathbf{L}=\mathbf{N}\cap \mathbf{C}$,
$\mathbf{J}=\{\bm{1}_{\{x\}}\mid x\in I\}$, and
$\mathbf{K}=\{\bm{1}_{[a, b]}\mid 0\leq a \leq b\leq 1\}$.


The following basic operations on $\mathbf{M}$ were introduced by
Mizumoto and Tanaka~\cite{MT1976}.

\begin{definition}[{\textrm{\protect\cite{WW2005}}}]
\label{Def-7}
The operations of $\sqcap$ ({\it meet-convolution}), $\sqcup$ ({\it join-convolution}), $\neg$ ({\it negation}) on $\mathbf{M}$ are
defined as follows: for $f$, $g\in \mathbf{M}$,
\begin{align*}
(f\sqcap g)(x)&=\sup\{f(y)\wedge g(z)\mid y\wedge z=x\},\\
(f\sqcup g)(x)&=\sup\{f(y)\wedge g(z)\mid y\vee z=x\},
\end{align*}
and
$$
(\neg f)(x)=\sup\{f(y)\mid 1-y=x\}=f(1-x).
$$
\end{definition}

From \cite{WW2005}, it follows that $\mathbb{M}=(\mathbf{M}, \sqcup, \sqcap, \neg, \bm{1}_{\{0\}}, \bm{1}_{\{1\}})$
does not have a lattice structure, although $\sqcup$ and $\sqcap$ satisfy the De Morgan's laws with respect
to $\neg$.

Walker and Walker \cite{WW2005} introduced the following partial orders $\sqsubseteq$ and $\preceq$ on $\mathbf{M}$.
\begin{definition}[{\textrm{\protect\cite{WW2005}}}]
\label{Def-order}
$f\sqsubseteq g$ if $f\sqcap g=f$; $f\preceq g$ if $f\sqcup g=g$.
\end{definition}

It follows from \cite[Proposition 14]{WW2005} that both $\sqsubseteq$ and $\preceq$ are partial
orders on $\mathbf{M}$. Generally, the partial orders $\sqsubseteq$ and $\preceq$ do not coincide.
In \cite{WW2005,HWW2010,HWW2008,MT1976}, it was proved that the partial orders
$\sqsubseteq$ and $\preceq$ coincide on
$\mathbf{L}$, and the subalgebra $\mathbb{L}=(\mathbf{L}, \sqcup, \sqcap, \neg, \bm{1}_{\{0\}}, \bm{1}_{\{1\}})$
is a bounded complete lattice. In particular, $\bm{1}_{\{0\}}$ and $\bm{1}_{\{1\}}$ are the minimum and maximum
of $\mathbb{L}$, respectively. For systematic study on the algebra of fuzzy truth values, one is referred to
\cite{HWW2016,WW2005}.

{\color{blue}\begin{definition}
For $f\in \mathbf{M}$, define 
\begin{align*}
f^{L}(x)&=\sup\{f(y)\mid y\leq x\},\\
f^{L_{\mathrm{w}}}(x)&=\begin{cases}
\sup\{f(y)\mid y< x\}, & x\in (0, 1], \\
f(0), & x=0,
\end{cases}
\end{align*}
and
\begin{align*}
f^{R}(x)&=\sup\{f(y)\mid y\geq x\},\\
f^{R_{\mathrm{w}}}(x)&=\begin{cases}
\sup\{f(y)\mid y> x\}, & x\in [0, 1), \\
f(1), & x=1.
\end{cases}
\end{align*}
\end{definition}
Clearly, (1) $f^L$, $f^{L_{\mathrm{w}}}$ and $f^R$, $f^{R_{\mathrm{w}}}$ are monotonically increasing and
decreasing, respectively; (2) $f^{L}(x)\vee f^{R}(x)=f^{L}(x)\vee f^{R_{\mathrm{w}}}(x)=\sup_{z\in I}\{f(z)\}$ and
$f^{R}(x)\vee f^{L_{\mathrm{w}}}(x)=\sup_{z\in I}\{f(z)\}$ for all $x\in I$. The following properties of $f^{L}$
and $f^{R}$ are obtained by Walker et al.~\cite{HWW2008,HWW2010,WW2005}.

\begin{proposition}[{\textrm{\protect\cite{WW2005}}}]
\label{Proposition-1}
For $f$, $g\in \mathbf{M}$,
\begin{enumerate}[{\rm (1)}]
  \item $f\leq f^{L}\wedge f^{R}$;
  \item $(f^{L})^{L}=f^{L}$, $(f^{R})^{R}=f^{R}$;
  \item $(f^{L})^{R}=(f^{R})^{L}=\sup_{x\in I} \{f(x)\}$;
  \item $f\sqsubseteq g$ if and only if $f^{R}\wedge g \leq f\leq g^{R}$;
  \item $f\preceq g$ if and only if $f\wedge g^{L}\leq g\leq f^{L}$;
  \item $f$ is convex if and only if $f=f^{L}\wedge f^{R}$.
\end{enumerate}
\end{proposition}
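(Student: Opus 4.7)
The plan is to dispatch items (1)--(3) as immediate consequences of the monotonicity of $f^L$ and $f^R$, then to reduce (4) and (5) to explicit pointwise formulas for the convolutions $\sqcap$ and $\sqcup$, and finally to deduce (6) from the characterization of convexity together with (1).

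For (1), I would observe that in the supremum $f^L(x) = \sup\{f(y) : y \leq x\}$ the choice $y = x$ is allowed, hence $f(x) \leq f^L(x)$; symmetrically $f(x) \leq f^R(x)$. For (2), since $f^L$ is nondecreasing, $(f^L)^L(x) = \sup_{y \leq x} f^L(y) = f^L(x)$, and dually for $f^R$. For (3), since $f^L$ is nondecreasing, $(f^L)^R(x) = \sup_{y \geq x} f^L(y) = f^L(1) = \sup_{z \in I} f(z)$, and by the mirror argument $(f^R)^L(x) = f^R(0) = \sup_{z \in I} f(z)$.

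For (4), the key step is to simplify
\[
(f \sqcap g)(x) = \sup\{f(y) \wedge g(z) : y \wedge z = x\}
\]
by noting that $\min(y,z) = x$ forces $(y,z)$ into one of the two branches $y = x \leq z$ or $z = x \leq y$. Splitting the supremum across these branches yields the pointwise identity
\[
(f \sqcap g)(x) = \bigl(f(x) \wedge g^R(x)\bigr) \vee \bigl(f^R(x) \wedge g(x)\bigr).
\]
Setting this equal to $f(x)$, the ``$\leq f(x)$'' half gives exactly $f^R(x) \wedge g(x) \leq f(x)$, while the ``$\geq f(x)$'' half gives a disjunction that I would collapse to $f(x) \leq g^R(x)$ after a short case split. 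Conversely, assuming $f^R \wedge g \leq f \leq g^R$, the two terms on the right respectively equal $f(x)$ and are dominated by $f(x)$, so the join is $f(x)$. Item (5) follows by the dual computation using the partition associated with $y \vee z = x$.

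For (6), the forward implication combined with (1) reduces to showing that if $f$ is convex then $f^L(y) \wedge f^R(y) \leq f(y)$ for every $y$: otherwise one extracts witnesses $x \leq y \leq z$ with $f(x) > f(y)$ and $f(z) > f(y)$, contradicting convexity. The converse is immediate from $f^L(y) \geq f(x)$ and $f^R(y) \geq f(z)$ whenever $x \leq y \leq z$. The main obstacle I foresee is the forward direction of (4): while $f^R \wedge g \leq f$ is read off directly from the upper bound in the identity, deriving $f \leq g^R$ requires a short case analysis on whether $g^R(x) \geq f(x)$ already (immediate) or whether only $g(x) \geq f(x)$ holds at that point (in which case $g^R(x) \geq g(x) \geq f(x)$ still delivers the claim). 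The corresponding step in (5) carries the same subtlety.
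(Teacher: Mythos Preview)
Your argument is correct in every part. Note, however, that the paper does not supply its own proof of this proposition: it is quoted verbatim from Walker and Walker~\cite{WW2005} and left unproved, so there is nothing to compare against in this paper. Your pointwise identity
\[
(f\sqcap g)(x)=\bigl(f(x)\wedge g^{R}(x)\bigr)\vee\bigl(f^{R}(x)\wedge g(x)\bigr)
\]
is equivalent (after distributing $\wedge$ over $\vee$ and using $f\le f^{R}$, $g\le g^{R}$) to the formula $f\sqcap g=(f\vee g)\wedge(f^{R}\wedge g^{R})$ that the paper later records as Lemma~\ref{Max-Min-Operation}, so your derivation of (4) is in line with the machinery the paper relies on elsewhere. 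The case split you flag as the ``main obstacle'' in the forward direction of (4) is indeed the only nontrivial step, and your handling of it is sound: from $f(x)\le (f(x)\wedge g^{R}(x))\vee(f^{R}(x)\wedge g(x))$ one of the two joinands must dominate $f(x)$, and in either case $f(x)\le g^{R}(x)$ follows (directly, or via $g(x)\le g^{R}(x)$).
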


\begin{theorem}[{\textrm{\protect\cite{HWW2010,HWW2008}}}]
\label{order-theorem}
Let $f$, $g\in \mathbf{L}$. Then, $f\sqsubseteq g$ if and only if
$g^L\leq f^L$ and $f^R\leq g^R$.
\end{theorem}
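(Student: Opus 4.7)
The plan is to leverage part (4) of the preceding Proposition, which rephrases $f\sqsubseteq g$ as the single chain $f^{R}\wedge g\leq f\leq g^{R}$, together with the convexity identities $f=f^{L}\wedge f^{R}$ and $g=g^{L}\wedge g^{R}$ from part (6), as the bridge between the two characterizations. The idempotence $(f^{R})^{R}=f^{R}$ of part (2) and the monotonicity of the $L$/$R$ operations will do the rest.

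The backward direction is essentially bookkeeping. Assuming $g^{L}\leq f^{L}$ and $f^{R}\leq g^{R}$, I obtain $f=f^{L}\wedge f^{R}\leq f^{R}\leq g^{R}$, and $f^{R}\wedge g=f^{R}\wedge g^{L}\wedge g^{R}\leq f^{R}\wedge f^{L}=f$, whence $f\sqsubseteq g$ by part (4).

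For the forward direction, $f^{R}\leq g^{R}$ is immediate by applying the monotone idempotent operation $(\cdot)^{R}$ to $f\leq g^{R}$. The substantive claim is $g^{L}\leq f^{L}$, which I would prove pointwise: fix $x\in I$ and bound $g(y)$ by $f^{L}(x)$ for each $y\leq x$, then pass to the supremum. I split on whether $g(y)\leq f^{R}(y)$. In the benign case $g(y)\leq f^{R}(y)$, the inequality $f^{R}\wedge g\leq f$ collapses to $g(y)\leq f(y)\leq f^{L}(y)\leq f^{L}(x)$. In the remaining case $g(y)>f^{R}(y)$, one has $f^{R}(y)<g(y)\leq 1$, so the decomposition $\sup f=\sup_{z<y}f(z)\vee f^{R}(y)$ together with normality $\sup f=1$ forces $\sup_{z<y}f(z)=1$; since $z<y\leq x$ gives $z\leq x$, this yields $f^{L}(x)\geq 1$, and hence $f^{L}(x)=1\geq g(y)$.

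The main obstacle is the second case of the forward direction: it is the only place where normality of $f$ (rather than mere convexity) is used, and it requires extracting $\sup_{z<y}f(z)=1$ from $f^{R}(y)<1$ via the split $\sup f=\sup_{z<y}f(z)\vee f^{R}(y)$. Note that $y=0$ cannot occur in this case, since normality gives $f^{R}(0)=1$, so the interval $[0,y)$ over which one takes the supremum is nonempty. Without normality this step collapses, so the full hypothesis $f\in\mathbf{L}$ is genuinely needed; everything else is driven by the convexity identity $f=f^{L}\wedge f^{R}$ and monotonicity of the $L$/$R$ operators.
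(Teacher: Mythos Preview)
The paper does not supply its own proof of this theorem; it is quoted from \cite{HWW2010,HWW2008} and used as a black box. So there is no in-paper argument to compare against.

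Your argument is correct. For the backward direction, note that you do not actually need the convexity decomposition $f=f^{L}\wedge f^{R}$ to get $f\leq g^{R}$: already $f\leq f^{R}\leq g^{R}$ by part~(1) of the Proposition and the hypothesis. You do use convexity of $g$ (to write $g=g^{L}\wedge g^{R}$) and of $f$ (to identify $f^{L}\wedge f^{R}=f$) in the second inequality, which is fine since $f,g\in\mathbf{L}$.

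For the forward direction, $f^{R}\leq g^{R}$ follows exactly as you say from monotonicity and idempotence of $(\cdot)^{R}$. The pointwise case split for $g^{L}\leq f^{L}$ is sound: in the case $g(y)>f^{R}(y)$, the decomposition $1=\sup f=\sup_{z<y}f(z)\vee f^{R}(y)$ together with $f^{R}(y)<1$ indeed forces $\sup_{z<y}f(z)=1$, since the maximum of two reals in $[0,1]$ equals $1$ only if one of them does. Your observation that $y=0$ cannot occur here (as $f^{R}(0)=\sup f=1$ by normality) disposes of the degenerate case where $[0,y)$ would be empty. This is precisely where normality of $f$ enters, as you correctly identify; convexity alone would not suffice.
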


\begin{lemma}
[{\textrm{\protect\cite[Corollary~5, Proposition~6]{WW2005}}}]\label{f-g-L-R}
For $f$, $g\in \mathbf{L}$,
\begin{itemize}
\item[{\rm (1)}] $(f\sqcap g)^{L}=f^{L}\vee g^{L}$ and $(f\sqcap g)^{R}=f^{R}\wedge g^{R}$;
\item[{\rm (2)}] $(f\sqcup g)^{L}=f^{L}\wedge g^{L}$ and $(f\sqcup g)^{R}=f^{R}\vee g^{R}$.
\end{itemize}
\end{lemma}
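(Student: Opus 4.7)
The plan is to prove all four identities in Lemma~\ref{f-g-L-R} by a direct computation from the definitions of $\sqcap$, $\sqcup$, $(\cdot)^L$, $(\cdot)^R$, with the key reduction being an order-theoretic observation about the level sets of $\wedge$ and $\vee$ on $I$. I would not expect convexity to enter the argument; only normality of $f$ and $g$ is essential, and it is used in a single symmetric way.

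First, I would rewrite the left-hand sides by interchanging suprema:
\[
(f\sqcap g)^{L}(x)=\sup_{t\leq x}\,\sup\{f(y)\wedge g(z):y\wedge z=t\}=\sup\{f(y)\wedge g(z):y\wedge z\leq x\},
\]
and similarly
\[
(f\sqcap g)^{R}(x)=\sup\{f(y)\wedge g(z):y\wedge z\geq x\},
\]
\[
(f\sqcup g)^{L}(x)=\sup\{f(y)\wedge g(z):y\vee z\leq x\},\qquad (f\sqcup g)^{R}(x)=\sup\{f(y)\wedge g(z):y\vee z\geq x\}.
\]
Then I would invoke the elementary facts that $y\wedge z\geq x$ iff ($y\geq x$ and $z\geq x$), $y\vee z\leq x$ iff ($y\leq x$ and $z\leq x$), and dually $y\wedge z\leq x$ iff ($y\leq x$ or $z\leq x$), $y\vee z\geq x$ iff ($y\geq x$ or $z\geq x$). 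These four observations match the four identities of the lemma exactly.

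Next, the two "conjunctive" identities $(f\sqcap g)^{R}=f^{R}\wedge g^{R}$ and $(f\sqcup g)^{L}=f^{L}\wedge g^{L}$ follow by decoupling the two quantifiers. For instance, for the first one the sup over pairs $(y,z)$ with $y\geq x$ and $z\geq x$ satisfies the easy inequality $f(y)\wedge g(z)\leq f^{R}(x)\wedge g^{R}(x)$, and conversely, given $\epsilon>0$ one selects independent $y_{0}\geq x$ with $f(y_{0})\geq f^{R}(x)-\epsilon$ and $z_{0}\geq x$ with $g(z_{0})\geq g^{R}(x)-\epsilon$ to realize the right-hand side up to $\epsilon$. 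The identity $(f\sqcup g)^{L}=f^{L}\wedge g^{L}$ is the dual, obtained by replacing every $\geq$ with $\leq$.

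Finally, for the two "disjunctive" identities $(f\sqcap g)^{L}=f^{L}\vee g^{L}$ and $(f\sqcup g)^{R}=f^{R}\vee g^{R}$, the inequality $\leq$ is immediate from $f(y)\wedge g(z)\leq f(y)$ on the branch $y\leq x$ (resp.\ $y\geq x$) and symmetrically. For the reverse inequality, fix $y_{0}\leq x$ and use normality of $g$: since $\sup_{z}g(z)=1$, pick $z_{n}$ with $g(z_{n})\to 1$, so that $f(y_{0})\wedge g(z_{n})\to f(y_{0})$ and each pair $(y_{0},z_{n})$ is admissible because $y_{0}\leq x$. Hence $f^{L}(x)\leq (f\sqcap g)^{L}(x)$, and symmetrically $g^{L}(x)\leq (f\sqcap g)^{L}(x)$; the same trick with normality of $f$ handles the $\sqcup,R$ case. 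The only mildly delicate step is noticing that the suprema defining $f^L$ and $f^R$ need not be attained, so one works with near-maximizers and passes $\epsilon$ to $0$; this is the main (minor) obstacle, and it is the reason normality—not merely $f,g\in\mathbf{M}$—is invoked.
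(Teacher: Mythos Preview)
Your argument is correct: the interchange of suprema, the four order-theoretic characterizations of the level sets of $\wedge$ and $\vee$, the decoupling for the ``conjunctive'' pair, and the use of normality via near-maximizers for the ``disjunctive'' pair all go through exactly as you describe. You are also right that convexity plays no role and that only normality is needed, and only for the identities $(f\sqcap g)^{L}=f^{L}\vee g^{L}$ and $(f\sqcup g)^{R}=f^{R}\vee g^{R}$.

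As for comparison with the paper: this lemma is not proved in the present paper at all---it is quoted without proof from \cite[Corollary~5, Proposition~6]{WW2005}. In Walker and Walker's original treatment the two ``conjunctive'' identities hold for arbitrary $f,g\in\mathbf{M}$ (their Corollary~5), while the two ``disjunctive'' ones are stated under a normality hypothesis (their Proposition~6), which matches precisely the dependency you isolated. Your direct computation is essentially the standard proof of those facts, so there is no substantive difference in approach; you have simply supplied the argument that the present paper omits by citation.
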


\begin{lemma}[{\textrm{\protect \cite[Theorem~4]{WW2005}}}]
\label{Max-Min-Operation}
Let $f$, $g\in \mathbf{M}$. Then,
\begin{enumerate}[{\rm (1)}]
  \item $f\sqcup g=(f\vee g)\wedge (f^{L}\wedge g^{L})$,
  \item $f\sqcap g=(f\vee g)\wedge (f^{R}\wedge g^{R})$.
\end{enumerate}
\end{lemma}
}

\subsection{T-norms and convolution operations on $\mathbf{M}$}

\begin{definition}[{\textrm{\protect\cite{KMP2000}}}]
A binary operation $\ast: I^{2}\rightarrow I$ is a {\it t-norm} on $I$ if it satisfies the following axioms:
\begin{itemize}
  \item[(T1)] ({\it commutativity}) $x\ast y=y\ast x$ for $x$, $y\in I$;
  \item[(T2)] ({\it associativity}) $(x \ast y) \ast z=x\ast (y\ast z)$ for $x$, $y$, $z\in I$;
  \item[(T3)] ({\it increasing}) $\ast$ is increasing in each argument;
  \item[(T4)] ({\it neutral element}) $1\ast x=x\ast 1=x$ for $x\in I$.
\end{itemize}
A binary operation $\ast: I^2\rightarrow I$ is a
{\it t-conorm} on $I$ if it satisfies axioms (T1), (T2), and (T3) above,
and moreover axiom (T4'): $0\ast x=x\ast 0=x$ for $x\in I$.
\end{definition}

In 2006, Walker and Walker~\cite{WW2006} extended t-norm and t-conorm on $I$ to the algebra
of fuzzy truth values (see Definition~\ref{Def-1} below). Then, Hern\'{a}ndez et al.~\cite{HCT2015}
modified the definition of Walker and Walker, and introduced the notions of t$_{r}$-norm and t$_{r}$-conorm by adding some ``restrictive axioms'' (see Definition~\ref{Def-1} below).

\begin{definition}[{\textrm{\protect\cite{HCT2015,WW2006}}}]
\label{Def-1}
A binary operation $T: \mathbf{L}^2 \rightarrow \mathbf{L}$ is a {\it t$_{r}$-norm}
({\it t-norm according to the restrictive axioms}), if
\begin{itemize}
  \item[(O1)] $T$ is commutative, i.e., $T(f, g)=T(g, f)$ for $f$, $g\in \mathbf{L}$;
  \item[(O2)] $T$ is associative, i.e., $T(T(f, g), h)=T(f, T(g, h))$ for $f$, $g$, $h\in \mathbf{L}$;
  \item[(O3)] $T(f, \bm{1}_{\{1\}})=f$ for $f\in \mathbf{L}$ (neutral element);
  \item[(O4)] for $f, g, h\in \mathbf{L}$ such that $f\sqsubseteq g$,
  $T(f, h)\sqsubseteq T(g, h)$ (increasing in each argument);
  \item[(O5)] $T(\bm{1}_{[0, 1]}, \bm{1}_{[a, b]})=\bm{1}_{[0, b]}$;
  \item[(O6)] $T$ is closed on $\mathbf{J}$;
  \item[(O7)] $T$ is closed on $\mathbf{K}$.
\end{itemize}

A binary operation $S: \mathbf{L}^2\rightarrow \mathbf{L}$ is a
{\it t$_{r}$-conorm} if it satisfies axioms (O1), (O2), (O4), (O6), and (O7) above,
axiom (O3'): $S(f, \bm{1}_{\{0\}})=f$, and axiom (O5'):
$S(\bm{1}_{[0, 1]}, \bm{1}_{[a, b]})=\bm{1}_{[a, 1]}$. Axioms (O1), (O2), (O3), (O3'),
and (O4) are called ``{\it basic axioms}'', and an operation that complies with
these axioms will be referred to as {\it t-norm} and {\it t-conorm}, respectively.
\end{definition}


\begin{definition}[{\textrm{\protect\cite[Definition~5.2.6]{HWW2016}}}]
A binary operation $T: \mathbf{L}^2\rightarrow \mathbf{L}$ is a
{\it lattice-ordered t$_{r}$-norm} ({\it t$_{lor}$-norm} for short) if it satisfies axioms
(O1), (O2), (O3), (O5), (O6), and (O7) above,
axiom (O4'): $T(f, g\sqcup h)=T(f, g)\sqcup T(f, h)$, and axiom (O4"):
$T(f, g\sqcap h)=T(f, g)\sqcap T(f, h)$.
\end{definition}


The following binary operations are introduced by Hern\'{a}ndez et al.~\cite{HCT2015}
for generalizing meet-convolution and join-convolution on $\mathbf{M}$.

\begin{definition}[{\textrm{\protect\cite{HCT2015}}}]\label{HCT-Def}
Let $\ast$ be a binary operation on $I$, $\Delta$ be a
t-norm on $I$, and $\nabla$ be a t-conorm on $I$. Define the binary operations
$\widetilde{\Delta}_{\ast}$ and $\widetilde{\nabla}_{\ast}: \mathbf{M}^2\rightarrow \mathbf{M}$
as follows: for $f$, $g\in \mathbf{M}$,
\begin{equation}\label{O-1}
(f\widetilde{\Delta}_{\ast} g)(x)=\sup\left\{f(y)\ast g(z)\mid y\Delta z =x\right\},
\end{equation}
and
\begin{equation}\label{O-2}
(f\widetilde{\nabla}_{\ast} g)(x)=\sup\left\{f(y)\ast g(z)\mid y\nabla z =x\right\}.
\end{equation}
\end{definition}

\section{\color{blue}Relation between t$_{lor}$-norm and t$_{r}$-norm}\label{S-V}

This section reveals the relation between t$_{lor}$-norm and t$_{r}$-norm on $\mathbf{L}$. In particular,
it is shown that t$_{lor}$-norm is strictly stronger than t$_{r}$-norm on $\mathbf{L}$, by {\color{blue}constructing a
t$_{r}$-norm which is not a t$_{lor}$-norm. This means that every t$_{lor}$-norm on $\mathbf{L}$ is a t$_{r}$-norm on
$\mathbf{L}$, not vice versa.

\begin{theorem}\label{Thm-WW->tr}
Let $T$ be a t$_{lor}$-norm on $\mathbf{L}$. Then, it is a t$_{r}$-norm on $\mathbf{L}$.
\end{theorem}
\begin{proof}
It suffices to check that axiom (O4'') implies axiom (O4). For $f$, $g$, $h\in \mathbf{L}$ with
$f\sqsubseteq g$, it is clear that $T(f, h)\sqcap T(g, h)=T(f\sqcap g, h)=T(f, h)$ by axiom (O4''),
implying that
$T(f, h)\sqsubseteq T(g, h)$ by Definition~\ref{Def-order}.
\end{proof}

It should be noted that, according to \cite[Theorem~5.5.3]{HWW2016}
and \cite[Proposition~14]{HCT2015}, the binary operation $\widetilde{\Delta}_{\min}$ is a t$_{lor}$-norm on $\mathbf{L}$,
provided that $\Delta$ is a continuous t-norm on $I$. In particular, the meet-convolution $\sqcap$ is a t$_{lor}$-norm on
$\mathbf{L}$. }

\begin{lemma}[{\textrm{\protect\cite[Lemma~2.2]{WCW2022}}}]
\label{<-Lemma}
For $f\in \mathbf{N}$, $\inf\{x\in I\mid f^{L}(x)=1\}\leq \sup\{x\in I\mid f^{R}(x)=1\}$.
\end{lemma}

\begin{definition}[{\textrm{\protect\cite[Definition~4.1]{WCW2022}}}]
\label{Def-Star}
Define a binary operation $\hollowstar: \mathbf{L}^2\rightarrow \mathbf{M}$ as follows:
for $f$, $g\in \mathbf{L}$,

(1) $f=\bm{1}_{\{1\}}$, $f\hollowstar g=g\hollowstar f=g$;


(2) $f\neq \bm{1}_{\{1\}}$ and $g\neq \bm{1}_{\{1\}}$,
\begin{equation}\label{xin-operation}
(f\hollowstar g)(t)=\begin{cases}
f^{L}(t)\vee g^{L}(t), & t\in [0, \eta),\\
1, & t\in [\eta, \xi),\\
f^{R}(\xi)\wedge g^{R}(\xi), & t=\xi, \\
0, & t\in (\xi, 1],
\end{cases}
\end{equation}
where $\eta=\inf\{x\in I\mid f^{L}(x)=1\}\wedge \inf\{x\in I\mid g^{L}(x)=1\}$ and
$\xi=\sup\{x\in I\mid f^{R}(x)=1\}\wedge \sup\{x\in I\mid g^{R}(x)=1\}$.
\end{definition}
Clearly, $f\hollowstar g$
is increasing on $[0, \eta)$. Meanwhile, applying Lemma~\ref{<-Lemma} yields that $\eta \leq \xi$.

\begin{theorem}[{\textrm{\protect\cite[Theorem~4.8]{WCW2022}}}]
\label{WW-Thm}
The binary operation $\hollowstar$ is a t$_{r}$-norm on $\mathbf{L}$.
\end{theorem}

\begin{theorem}[{\textrm{\protect\cite[Proposition~4.2]{WCW2022}}}]
\label{LR*-Thm}
For $f$, $g\in \mathbf{L}\backslash \left\{\bm{1}_{\{1\}}\right\}$,
\begin{align}
(f\hollowstar g)^{L}(t)&=
\begin{cases}
f^{L}(t)\vee g^{L}(t), & t\in [0, \eta),\\
1, & t\in [\eta, 1],
\end{cases}\\
(f\hollowstar g)^{R}(t)&=
\begin{cases}
1, & t\in [0, \xi),\\
f^{R}(\xi)\wedge g^{R}(\xi), & t=\xi, \\
0, & t\in (\xi, 1],
\end{cases}
\end{align}
where $\eta=\inf\{x\in I\mid f^{L}(x)=1\}\wedge \inf\{x\in I\mid g^{L}(x)=1\}$, and
$\xi=\sup\{x\in I\mid f^{R}(x)=1\}\wedge\sup\{x\in I\mid g^{R}(x)=1\}$.
\end{theorem}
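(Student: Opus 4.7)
The plan is to compute both running suprema $(f \bigstar g)^L(t) = \sup_{y \leq t}(f \bigstar g)(y)$ and $(f \bigstar g)^R(t) = \sup_{y \geq t}(f \bigstar g)(y)$ directly from the piecewise formula~\eqref{xin-operation}, partitioning according to the position of $t$ with respect to the thresholds $\eta$ and $\xi$. Throughout I use Lemma~\ref{<-Lemma}, which yields $\eta \leq \xi$ and ensures that the four intervals $[0,\eta)$, $[\eta,\xi)$, $\{\xi\}$, $(\xi,1]$ appear in the correct order.

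For $(f \bigstar g)^L$, if $t \in [0, \eta)$ then every $y \leq t$ lies in $[0, \eta)$, so $(f \bigstar g)(y) = f^L(y) \vee g^L(y)$; since $f^L$ and $g^L$ are increasing, the supremum is attained at $y = t$ and equals $f^L(t) \vee g^L(t)$. If $t \in [\eta, 1]$ and $\eta < \xi$, the point $y = \eta$ itself satisfies $(f \bigstar g)(\eta) = 1$, so $(f \bigstar g)^L(t) = 1$ at once. For $(f \bigstar g)^R$, if $t \in (\xi, 1]$ then $(f \bigstar g)(y) = 0$ for every $y \geq t$, giving $0$; if $t = \xi$ then $y > \xi$ contributes $0$ and $y = \xi$ contributes $f^R(\xi) \wedge g^R(\xi)$; if $t \in [0, \xi)$ and $\eta < \xi$, any point in $[\max(t,\eta), \xi)$ supplies value $1$.

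The main obstacle is the degenerate case $\eta = \xi$, where the $1$-plateau $[\eta, \xi)$ is empty and the value $1$ must be recovered from the boundary value $f^R(\xi) \wedge g^R(\xi)$ together with the tail of $f^L \vee g^L$ just below $\eta$. In this situation
\[
(f \bigstar g)^L(\eta) \;=\; \sup_{y < \eta}\!\bigl(f^L(y) \vee g^L(y)\bigr) \;\vee\; \bigl(f^R(\eta) \wedge g^R(\eta)\bigr),
\]
and I argue by dichotomy. If $\sup_{y < \eta} f^L(y) = 1$ or $\sup_{y < \eta} g^L(y) = 1$, the first term already equals $1$. Otherwise there is some $c < 1$ with $f^L(y), g^L(y) \leq c$ (hence $f(y), g(y) \leq c$) for every $y < \eta$; normality of $f$ and $g$ then forces the ``$1$-mass'' to lie entirely on $[\eta, 1]$, giving $f^R(\eta) = g^R(\eta) = 1$, so the second term equals $1$. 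Exactly the same dichotomy yields $(f \bigstar g)^R(t) = 1$ for every $t \in [0, \xi) = [0, \eta)$, because $\sup_{y \in [t, \eta)}(f^L(y) \vee g^L(y)) = \sup_{y < \eta}(f^L(y) \vee g^L(y))$ by monotonicity of $f^L$ and $g^L$. Monotonicity of $(f \bigstar g)^L$ then propagates $(f \bigstar g)^L(\eta)=1$ to all of $[\eta, 1]$, completing the proof of both formulas.
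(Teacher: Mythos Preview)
The paper does not prove this theorem; it is quoted verbatim from \cite[Theorem~30]{WC-TFS-2} and used as a black box. Your argument is therefore not comparable to anything in the present paper, but it is a correct self-contained proof. The case split according to the position of $t$ relative to $\eta$ and $\xi$ is the natural one, and the only nontrivial point---the degenerate case $\eta=\xi$, where the constant-$1$ plateau $[\eta,\xi)$ collapses---is handled cleanly by your dichotomy: either $\sup_{y<\eta}(f^L(y)\vee g^L(y))=1$ directly, or both partial suprema are strictly below $1$, in which case normality forces $f^R(\eta)=g^R(\eta)=1$. That argument also covers the extreme sub-case $\eta=\xi=0$ vacuously, since then $f^R(0)=\sup f=1$ and $g^R(0)=\sup g=1$ by normality. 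Nothing is missing.
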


\begin{proposition}[{\textrm{\protect\cite[Lemma~4.1]{WCW2022}}}]
\label{not=1}
For $f$, $g\in \mathbf{L}\backslash \left\{\bm{1}_{\{1\}}\right\}$, $f\hollowstar g \neq \bm{1}_{\{1\}}$.
\end{proposition}

The following theorem shows that the binary operation $\hollowstar$ satisfies
the distributivity law for meet-convolution $\sqcap$.

\begin{theorem}
For $f$, $g$, $h\in \mathbf{L}$, $f \hollowstar (g\sqcap h)=(f\hollowstar g) \sqcap (f\hollowstar h)$.
\end{theorem}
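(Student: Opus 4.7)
The plan is to verify the identity by comparing the $L$- and $R$-functions of both sides. By Theorem~\ref{WW-Thm}, $\bigstar$ sends $\mathbf{L}^{2}$ into $\mathbf{L}$, and $\sqcap$ is closed on $\mathbf{L}$ by Lemma~\ref{Tr-norm}; hence both $f\bigstar(g\sqcap h)$ and $(f\bigstar g)\sqcap(f\bigstar h)$ belong to $\mathbf{L}$. Since convexity yields $p=p^{L}\wedge p^{R}$ for every $p\in\mathbf{L}$, it suffices to match the $L$-parts and the $R$-parts separately.

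First I would dispose of the degenerate cases. If $f=\bm{1}_{\{1\}}$, both sides reduce to $g\sqcap h$. If $g=\bm{1}_{\{1\}}$, then $g\sqcap h=h$ because $\bm{1}_{\{1\}}$ is the maximum of $\mathbb{L}$, so the left side equals $f\bigstar h$, while the right side is $f\sqcap(f\bigstar h)$; axiom (O4) applied to $h\sqsubseteq\bm{1}_{\{1\}}$ gives $f\bigstar h\sqsubseteq f\bigstar\bm{1}_{\{1\}}=f$, collapsing the meet back to $f\bigstar h$. The case $h=\bm{1}_{\{1\}}$ is symmetric, and $g\sqcap h\neq\bm{1}_{\{1\}}$ whenever $g,h\neq\bm{1}_{\{1\}}$ for the same maximality reason.

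For the generic case $f,g,h\neq\bm{1}_{\{1\}}$, set $\alpha_{p}=\inf\{x\in I:p^{L}(x)=1\}$ and $\beta_{p}=\sup\{x\in I:p^{R}(x)=1\}$. By Lemma~\ref{f-g-L-R}, $\alpha_{g\sqcap h}=\alpha_{g}\wedge\alpha_{h}$ and $\beta_{g\sqcap h}=\beta_{g}\wedge\beta_{h}$, so the thresholds in Theorem~\ref{LR*-Thm} for the left side satisfy
\[
\eta:=\alpha_{f}\wedge\alpha_{g}\wedge\alpha_{h}=\eta_{fg}\wedge\eta_{fh},\qquad \xi:=\beta_{f}\wedge\beta_{g}\wedge\beta_{h}=\xi_{fg}\wedge\xi_{fh},
\]
where $\eta_{fg},\xi_{fg},\eta_{fh},\xi_{fh}$ are the thresholds of $f\bigstar g$ and $f\bigstar h$. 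Theorem~\ref{LR*-Thm} gives $(f\bigstar(g\sqcap h))^{L}(t)=f^{L}(t)\vee g^{L}(t)\vee h^{L}(t)$ on $[0,\eta)$ and $1$ on $[\eta,1]$. Lemma~\ref{f-g-L-R} combined with Theorem~\ref{LR*-Thm} gives $((f\bigstar g)\sqcap(f\bigstar h))^{L}(t)=(f\bigstar g)^{L}(t)\vee(f\bigstar h)^{L}(t)$; for $t<\eta$ this equals $f^{L}(t)\vee g^{L}(t)\vee h^{L}(t)$, and for $t\geq\eta=\eta_{fg}\wedge\eta_{fh}$ at least one of the two factors already equals $1$. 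Thus the $L$-parts agree.

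For the $R$-parts, the same threshold argument makes both sides $1$ on $[0,\xi)$ and $0$ on $(\xi,1]$, so only the single value $t=\xi$ requires checking. The left side equals $f^{R}(\xi)\wedge g^{R}(\xi)\wedge h^{R}(\xi)$. If $\xi_{fg}=\xi_{fh}=\xi$, the right side matches this directly. Assuming without loss of generality $\xi_{fg}<\xi_{fh}$ (so $\xi=\xi_{fg}$), the strict inequality $\xi<\xi_{fh}\leq\beta_{h}$ forces $h^{R}(\xi)=1$, which both simplifies the left side to $f^{R}(\xi)\wedge g^{R}(\xi)$ and yields $(f\bigstar h)^{R}(\xi)=1$, so the right side also equals $(f\bigstar g)^{R}(\xi)\wedge 1=f^{R}(\xi)\wedge g^{R}(\xi)$. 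The main obstacle I anticipate is exactly this asymmetric boundary case at $t=\xi$; the key structural fact is that the smaller of $\xi_{fg},\xi_{fh}$ lies strictly below the $\beta$-parameter of the absent factor, making its $R$-value vacuous in the meet.
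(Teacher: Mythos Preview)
Your proposal is correct and follows essentially the same route as the paper: reduce to equality of the $L$- and $R$-parts, compute these via Theorem~\ref{LR*-Thm} and Lemma~\ref{f-g-L-R}, and handle the single boundary value $t=\xi$ by a case split on whether $\xi_{fg}$ and $\xi_{fh}$ coincide. Your treatment of the degenerate cases is in fact more explicit than the paper's (which simply declares them ``clear''), and your identities $\alpha_{g\sqcap h}=\alpha_g\wedge\alpha_h$ and $\beta_{g\sqcap h}=\beta_g\wedge\beta_h$ are exactly what the paper proves separately as Claims~\ref{claim-2} and~\ref{claim-3}; you compress these into a direct appeal to Lemma~\ref{f-g-L-R} plus monotonicity of $g^{R},h^{R}$, which is legitimate but perhaps worth one extra sentence since $\sup(A\cap B)=\sup A\wedge\sup B$ uses that the level sets are initial intervals.
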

\begin{proof}
Consider the following two cases:

\medskip

{\color{blue}Case 1. If one of $f$, $g$, and $h$ is equal to $\bm{1}_{\{1\}}$, we consider the following three subcases:

Case 1.1. If $f=\bm{1}_{\{1\}}$, it is clear that
$f \hollowstar (g\sqcap h)=g\sqcap h=(f\hollowstar g) \sqcap (f\hollowstar h)$;

Case 1.2. If $g=\bm{1}_{\{1\}}$, by direct calculation, 
we have $f \hollowstar (g\sqcap h)=f \hollowstar h$ and 
$(f\hollowstar g) \sqcap (f\hollowstar h)=f \sqcap (f\hollowstar h)$. Meanwhile, 
by Theorem~\ref{WW-Thm}, we get 
$f\hollowstar h\sqsubseteq f\hollowstar \bm{1}_{\{1\}}=f$, i.e., 
$f\sqcap (f\hollowstar h)=f\hollowstar h$. And thus $f \hollowstar (g\sqcap h)=f \hollowstar h
=(f\hollowstar g) \sqcap (f\hollowstar h)$;

Case 1.3. If $h=\bm{1}_{\{1\}}$, similarly to Case 1.2, it can be verified that 
$f \hollowstar (g\sqcap h)=f \hollowstar g
=(f\hollowstar g) \sqcap (f\hollowstar h)$.}

\medskip

Case 2. If none of $f$, $g$, and $h$ are equal to $\bm{1}_{\{1\}}$, from Theorem~\ref{LR*-Thm} and
Proposition~\ref{not=1}, it follows that
\begin{align}
(f\hollowstar g)^{L}(t)&=
\begin{cases}
f^{L}(t)\vee g^{L}(t), & t\in [0, \eta_1),\\
1, & t\in [\eta_1, 1],
\end{cases}\label{e-1.1}\\
(f\hollowstar g)^{R}(t)&=
\begin{cases}
1, & t\in [0, \xi_1),\\
f^{R}(\xi_1)\wedge g^{R}(\xi_1), & t=\xi_1, \\
0, & t\in (\xi_1, 1],
\end{cases}\label{e-1.2}
\end{align}
where $\eta_1=\inf\{x\in I\mid f^{L}(x)=1\}\wedge \inf\{x\in I\mid g^{L}(x)=1\}$ and
$\xi_1=\sup\{x\in I\mid f^{R}(x)=1\}\wedge\sup\{x\in I\mid g^{R}(x)=1\}$; and
\begin{align}
(f\hollowstar h)^{L}(t)&=
\begin{cases}
f^{L}(t)\vee h^{L}(t), & t\in [0, \eta_2),\\
1, & t\in [\eta_2, 1],
\end{cases}\label{e-2.1}\\
(f\hollowstar h)^{R}(t)&=
\begin{cases}
1, & t\in [0, \xi_2),\\
f^{R}(\xi_2)\wedge h^{R}(\xi_2), & t=\xi_2, \\
0, & t\in (\xi_2, 1],
\end{cases}\label{e-2.2}
\end{align}
where $\eta_2=\inf\{x\in I\mid f^{L}(x)=1\}\wedge \inf\{x\in I\mid h^{L}(x)=1\}$ and
$\xi_2=\sup\{x\in I\mid f^{R}(x)=1\}\wedge\sup\{x\in I\mid h^{R}(x)=1\}$. According to
Lemma~\ref{f-g-L-R}, we get
\begin{equation}\label{fgh^L}
\begin{split}
&\quad((f\hollowstar g)\sqcap (f\hollowstar h))^{L}(t)\\
&=(f\hollowstar g)^{L}\vee (f\hollowstar h)^{L}\\
&=\begin{cases}
f^{L}(t)\vee g^{L}(t)\vee h^{L}(t), & t\in [0, \eta_1\wedge \eta_{2}),\\
1, & t\in [\eta_1\wedge \eta_2, 1].
\end{cases}
\end{split}
\end{equation}

\begin{claim}\label{claim-1}
$(f\hollowstar g)^{R}(\xi_1\wedge \xi_2)
\wedge (f\hollowstar h)^{R}(\xi_1\wedge \xi_2)=f^{R}(\xi_1\wedge \xi_2)
\wedge g^{R}(\xi_1\wedge \xi_2)\wedge h^{R}(\xi_1\wedge \xi_2)$.
\end{claim}

Case 2-1. If $\xi_1=\xi_2$, then $(f\hollowstar g)^{R}(\xi_1\wedge \xi_2)
\wedge (f\hollowstar h)^{R}(\xi_1\wedge \xi_2)=f^{R}(\xi_1)\wedge g^{R}(\xi_1) \wedge f^{R}(\xi_2)\wedge h^{R}(\xi_2)
=f^{R}(\xi_1)\wedge g^{R}(\xi_1)\wedge h^{R}(\xi_1)$.

Case 2-2. If $\xi_1<\xi_2$, then $(f\hollowstar g)^{R}(\xi_1)=f^{R}(\xi_1)\wedge g^{R}(\xi_1)$ and
$(f\hollowstar h)^{R}(\xi_1)=1$, implying that
\begin{equation}\label{e-1}
(f\hollowstar g)^{R}(\xi_1) \wedge (f\hollowstar h)^{R}(\xi_1)=f^{R}(\xi_1)\wedge g^{R}(\xi_1).
\end{equation}
From $\xi_1<\xi_2\leq \sup\{x\in I\mid h^{R}(x)=1\}$, it follows that
there exists $\xi_1<\hat{x}\leq \sup\{x\in I\mid h^{R}(x)=1\}$ such that $h^{R}(\hat{x})=1$. Thus,
$$
h^{R}(\xi_1)\geq h^{R}(\hat{x})=1.
$$
Together with (\ref{e-1}), we have
$$
(f\hollowstar g)^{R}(\xi_1) \wedge (f\hollowstar h)^{R}(\xi_1)=f^{R}(\xi_1)\wedge g^{R}(\xi_1)\wedge h^{R}(\xi_1).
$$

Case 2-3. If $\xi_2<\xi_1$, similarly to the proof of Case 2-2, it can be verified that
$$
(f\hollowstar g)^{R}(\xi_2) \wedge (f\hollowstar h)^{R}(\xi_2)=f^{R}(\xi_2)\wedge g^{R}(\xi_2)\wedge h^{R}(\xi_2).
$$

Combining Lemma~\ref{f-g-L-R}, (\ref{e-1.2}), (\ref{e-2.2}), and Claim~\ref{claim-1} yields that
\begin{equation}\label{fgh^R}
\begin{split}
&\quad ((f\hollowstar g)\sqcap (f\hollowstar h))^{R}(t)\\
&= (f\hollowstar g)^{R}(t)\wedge (f\hollowstar h)^{R}(t)\\
&= \begin{cases}
1, & t\in [0, \xi_1\wedge \xi_2),\\
f^{R}(t)\wedge g^{R}(t) \wedge h^{R}(t), & t=\xi_1\wedge \xi_2, \\
0, & t\in (\xi_1\wedge \xi_2, 1].
\end{cases}
\end{split}
\end{equation}

\begin{claim}\label{claim-2}
$\inf\{x\in I\mid (g\sqcap h)^{L}(x)=1\}=\inf\{x\in I\mid g^{L}(x)=1\}\wedge \inf\{x\in I\mid
h^{L}(x)=1\}$.
\end{claim}

It is clear that $\{x\in I\mid g^{L}(x)=1\}\cup \{x\in I\mid
h^{L}(x)=1\}=\{x\in I\mid (g^{L}\vee h^{L})(x)=1\}$. Applying
Lemma~\ref{f-g-L-R} yields that
$$
\{x\in I\mid (g\sqcap h)^{L}(x)=1\}=\{x\in I\mid (g^{L}\vee h^{L})(x)=1\}.
$$
Thus,
\begin{align*}
&\quad \inf\{x\in I\mid (g\sqcap h)^{L}(x)=1\}\\
&=\inf(\{x\in I\mid g^{L}(x)=1\}\cup \{x\in I\mid
h^{L}(x)=1\})\\
&=\inf\{x\in I\mid g^{L}(x)=1\}\wedge \inf\{x\in I\mid
h^{L}(x)=1\}.
\end{align*}

\begin{claim}\label{claim-3}
$\sup\{x\in I\mid (g\sqcap h)^{R}(x)=1\}=\sup\{x\in I\mid g^{R}(x)=1\}\wedge \sup\{x\in I\mid
h^{R}(x)=1\}$.
\end{claim}

From Lemma~\ref{f-g-L-R}, it follows that
$\{x\in I\mid (g\sqcap h)^{R}=1\}=\{x\in I\mid (g^{R}\wedge h^{R})(x)=1\}=\{x\in I\mid g^{R}(x)=1\}\cap
\{x\in I\mid h^{R}(x)=1\}$. This implies that
\begin{equation}\label{e-2}
\begin{split}
&\sup\{x\in I\mid (g\sqcap h)^{R}=1\}\\
\leq & \sup\{x\in I\mid g^{R}(x)=1\}\\
& \wedge \sup\{x\in I\mid h^{R}(x)=1\}.
\end{split}
\end{equation}
Since $g^{R}$ and $h^{R}$ are decreasing, we have
$$
{\color{blue}\{x\in I\mid g^{R}(x)=1\} \supseteq \left[0, \sup\{x\in I\mid g^{R}(x)=1\}\right),}
$$
and
$$
{\color{blue}\{x\in I\mid h^{R}(x)=1\} \supseteq \left[0, \sup\{x\in I\mid h^{R}(x)=1\}\right).}
$$
Thus,
{\color{blue}$\{x\in I\mid (g\sqcap h)^{R}=1\} \supseteq [0, \sup\{x\in I\mid g^{R}(x)=1\}\wedge \sup\{x\in I\mid h^{R}(x)=1\})$,}
implying that $\sup\{x\in I\mid (g\sqcap h)^{R}=1\}\geq \sup\{x\in I\mid g^{R}(x)=1\}\wedge \sup\{x\in I\mid  h^{R}(x)=1\}$. By (\ref{e-2}), we get
$\sup\{x\in I\mid (g\sqcap h)^{R}(x)=1\}=\sup\{x\in I\mid g^{R}(x)=1\}\wedge \sup\{x\in I\mid
h^{R}(x)=1\}$.

\medskip

Applying Claims \ref{claim-2}--\ref{claim-3} leads to that
$\inf\{x\in I\mid f^{L}(x)=1\}\wedge \inf\{x\in I\mid (g\sqcap h)^{L}(x)=1\}
=\inf\{x\in I\mid f^{L}(x)=1\}\wedge\inf\{x\in I\mid g^{L}(x)=1\}\wedge \inf\{x\in I\mid
h^{L}(x)=1\}=\eta_1\wedge \eta_2$ and  $\sup\{x\in I\mid f^{R}(x)=1\}\wedge \sup\{x\in I\mid
(g\sqcap h)^{R}(x)=1\}=\sup\{x\in I\mid f^{R}(x)=1\}\wedge\sup\{x\in I\mid g^{R}(x)=1\}\wedge
\sup\{x\in I\mid h^{R}(x)=1\}=\xi_1\wedge \xi_2$. This, together with Theorem~\ref{LR*-Thm}
and Proposition~\ref{not=1}, implies that
\begin{align*}
&\quad(f\hollowstar (g\sqcap h))^{L}(t)\\
&=
\begin{cases}
f^{L}(t)\vee (g\sqcap h)^{L}(t), & t\in [0, \eta_1\wedge \eta_2),\\
1, & t\in [\eta_1\wedge \eta_2, 1],
\end{cases}
\end{align*}
and
\begin{align*}
&\quad(f \hollowstar (g\sqcap h))^{R}(t)\\
&=
\begin{cases}
1, & t\in [0, \xi_1\wedge \xi_2),\\
f^{R}(t)\wedge (g\sqcap h)^{R}(t), & t=\xi_1\wedge \xi_2, \\
0, & t\in (\xi_1\wedge \xi_2, 1].
\end{cases}
\end{align*}
Combining this with Lemma~\ref{f-g-L-R}, (\ref{fgh^L}), and (\ref{fgh^R}) yields that
$$
(f\hollowstar (g\sqcap h))^{L}=((f\hollowstar g)\sqcap (f\hollowstar h))^{L},
$$
and
$$
(f\hollowstar (g\sqcap h))^{R}=((f\hollowstar g)\sqcap (f\hollowstar h))^{R}.
$$
Therefore, {\color{blue}by Proposition~\ref{Proposition-1} (6), we have}
$$
f\hollowstar (g\sqcap h)=(f\hollowstar g)\sqcap (f\hollowstar h).
$$
\end{proof}

The following theorem shows that the binary operation $\hollowstar$
does not satisfy the distributivity laws for meet-convolution $\sqcup$.

\begin{theorem}\label{not-WW-t-norm}
There exist $f$, $g$, $h\in \mathbf{L}$ such that
$f \hollowstar (g\sqcup h)\neq (f\hollowstar g) \sqcup (f\hollowstar h)$.
\end{theorem}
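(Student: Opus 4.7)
The plan is to exhibit an explicit triple $f, g, h \in \mathbf{L}$ for which the identity fails at a single point, guided by a preliminary analysis of the $L$-envelopes of both sides. By Lemma~\ref{f-g-L-R} and Theorem~\ref{LR*-Thm}, if I arrange $\eta_g < \eta_h$ and $\eta_g \leq \eta_f$ (writing $\eta_k = \inf\{x \in I : k^L(x) = 1\}$ for $k \in \mathbf{L}$), then at $t = \eta_g$ the $L$-envelope of the LHS equals $f^L(\eta_g) \vee (g^L(\eta_g) \wedge h^L(\eta_g))$, while that of the RHS equals $1 \wedge (f^L(\eta_g) \vee h^L(\eta_g)) = f^L(\eta_g) \vee h^L(\eta_g)$. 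These disagree whenever the chain $f^L(\eta_g) < g^L(\eta_g) < h^L(\eta_g)$ holds, which forces $g^L(\eta_g) < 1$; that is, the infimum defining $\eta_g$ must fail to be attained by $g^L$.

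With this in mind I would take $f = \bm{1}_{\{3/4\}}$ (so $\eta_f = \xi_f = 3/4$, $f^L(1/2) = 0$), let $h$ be the triangle-shaped function with $h(0) = h(1) = 0$ peaked at $5/8$ with $h(5/8) = 1$ (so $\eta_h = \xi_h = 5/8$ and $h^L(1/2) = 4/5$), and define
$$
g(x) = \begin{cases} 1/4, & x \in [0, 1/2], \\ 3/2 - x, & x \in (1/2, 1]. \end{cases}
$$
A short case-check shows $g$ is convex, and normality holds because $\sup_{x \in I} g(x) = 1$, with the supremum approached only as $x \to 1/2^+$. Reading off the envelopes gives $g^L(x) = 1/4$ on $[0, 1/2]$ and $g^L(x) = 1$ on $(1/2, 1]$, hence $\eta_g = 1/2$ with $g^L(1/2) = 1/4$, and similarly $\xi_g = 1/2$ with $g^R(1/2) = 1$; the predicted chain $0 < 1/4 < 4/5$ is satisfied.

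I would conclude by direct evaluation at $t = 1/2$. For the left-hand side, $\eta_{g \sqcup h} = 5/8$ by Lemma~\ref{f-g-L-R}, so $1/2 < \eta_f \wedge \eta_{g \sqcup h}$ and the definition of $\bigstar$ yields $(f \bigstar (g \sqcup h))(1/2) = f^L(1/2) \vee (g \sqcup h)^L(1/2) = 0 \vee 1/4 = 1/4$. For the right-hand side, $p := f \bigstar g$ has $\eta_1 = \xi_1 = 1/2$ and thus $p(1/2) = f^R(1/2) \wedge g^R(1/2) = 1$, while $q := f \bigstar h$ has $\eta_2 = \xi_2 = 5/8$ and so $q(1/2) = f^L(1/2) \vee h^L(1/2) = 4/5$; taking $y = z = 1/2$ in Definition~\ref{Def-7} then gives $(p \sqcup q)(1/2) \geq p(1/2) \wedge q(1/2) = 4/5 > 1/4$. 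The main obstacle is producing $g \in \mathbf{L}$ whose envelope $g^L$ has a strict jump at $\eta_g$, which requires $g$ to be flat on $[0, \eta_g]$ and to approach $1$ only as a right-sided limit from above; verifying convexity at that discontinuity is the one slightly delicate step, but a direct case analysis suffices.
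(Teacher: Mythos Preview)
Your proposal is correct and follows essentially the same approach as the paper: both take $f=\bm{1}_{\{3/4\}}$, choose a $g\in\mathbf{L}$ whose envelope $g^{L}$ jumps at $\eta_g=1/2$ (the paper uses $g(x)=0$ on $[0,1/2]$ and $2(1-x)$ on $(1/2,1]$, you use $g(x)=1/4$ on $[0,1/2]$ and $3/2-x$ on $(1/2,1]$), pick an auxiliary $h$ with $\eta_h>1/2$, and compare the two sides at $t=1/2$. The only cosmetic differences are your triangular $h$ versus the paper's $h(x)=x$ and your added motivating analysis via $L$-envelopes; one small omission is that you do not explicitly note $g\sqcup h\neq\bm{1}_{\{1\}}$ before invoking~\eqref{xin-operation}, but this is immediate from $(g\sqcup h)(5/8)=1$.
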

\begin{proof}
Choose respectively $f$, $g$, $h\in \mathbf{L}$ as follows:
\begin{align*}
f(x)&=\bm{1}_{\{0.75\}}(x), \ x\in [0, 1], \\
g(x)&=
\begin{cases}
0, & x\in [0, 0.5], \\
2(1-x), & x\in (0.5, 1],
\end{cases}\\
h(x)&=x, \ x\in [0, 1].
\end{align*}
\begin{enumerate}[(i)]
\item It can be verified that
\begin{align*}
f^{L}(x)&=\begin{cases}
0, & x\in [0, 0.75), \\
1, & x\in [0.75, 1],
\end{cases}\\
f^{R}(x)&=\begin{cases}
1, & x\in [0, 0.75], \\
0, & x\in (0.75, 1],
\end{cases}\\
h^{L}(x)&=x, \ x\in [0, 1], \\
h^{R}&\equiv 1, \\
g^{L}(x)& =\begin{cases}
0, & x\in [0, 0.5],\\
1, & x\in (0.5, 1],
\end{cases}\\
g^{R}&=\begin{cases}
1, & x\in [0, 0.5], \\
2(1-x), & x\in (0.5, 1],
\end{cases}\\
(g\vee h)(x)&=\begin{cases}
x, & x\in [0, 0.5], \\
2(1-x), & x\in [0.5, \frac{2}{3}], \\
x, & x\in (\frac{2}{3}, 1].
\end{cases}
\end{align*}
According to (\ref{xin-operation}), we have
\begin{align*}
(f\hollowstar g)(x)&=\begin{cases}
0, & x\in [0, 0.5), \\
1, & x=0.5,\\
0, & x\in (0.5, 1],
\end{cases}\\
(f\hollowstar h)(x)&=\begin{cases}
x, & x\in [0, 0.75), \\
1, & x=0.75, \\
0, & x\in (0.75, 1].
\end{cases}
\end{align*}
Combining this with Lemma~\ref{Max-Min-Operation}, it follows that
\begin{equation}\label{e-3.2}
((f\hollowstar g) \sqcup (f\hollowstar h))(0.5)=(1\vee 0.5)\wedge (1\wedge 0.5)
=0.5.
\end{equation}
\item From Lemma~\ref{Max-Min-Operation}, it follows that
\begin{equation}\label{e-3.1}
\begin{split}
&\quad (g\sqcup h)(x)\\
&=((g\vee h) \wedge (g^{L}\wedge h^{L}))(x)\\
&=\begin{cases}
0, & x\in [0, 0.5], \\
x, & x\in (0.5, 1].
\end{cases}
\end{split}
\end{equation}
Then,
\begin{align*}
(g\sqcup h)^{L}(x)&=\begin{cases}
0, & x\in [0, 0.5], \\
x, & x\in (0.5, 1],
\end{cases}\\
(g\sqcup h)^{R}(x)&\equiv 1.
\end{align*}

\item From (\ref{xin-operation}) and (\ref{e-3.1}), it follows that
\begin{align*}
(f\hollowstar (g\sqcup h))(x)=\begin{cases}
0, & x\in [0, 0.5], \\
x, & x\in (0.5, 0.75), \\
1, & x=0.75,\\
0, & x\in (0.75, 1].
\end{cases}
\end{align*}
In particular, $(f\hollowstar (g\sqcup h))(0.5)=0$. This, together with (\ref{e-3.2}), implies that
$$
f\hollowstar (g\sqcup h)\neq (f\hollowstar g) \sqcup (f\hollowstar h).
$$
\end{enumerate}
\end{proof}

\begin{remark}
From Theorems~\ref{WW-Thm} and \ref{not-WW-t-norm}, it follows that $\hollowstar$ is a
t$_{r}$-norm but not a t$_{lor}$-norm on $\mathbf{L}$. This implies that t$_{lor}$-norm
is strictly stronger that t$_{r}$-norm by Theorem~\ref{Thm-WW->tr}.
\end{remark}

\section{\color{blue} Relation between T$_{r}$-norm and t-norm}

Clearly, t$_{r}$-norm is stronger than t-norm, i.e., every t$_{r}$-norm
on $\mathbf{L}$ is a t-norm on $\mathbf{L}$. This section gives an example to show that this is strict.

\begin{definition}\label{bingstar-operation-2}
Define a binary operation $\chinastar: \mathbf{L}^2\rightarrow \mathbf{M}$ as follows: for $f, g\in \mathbf{L}$,

(1) $f=\bm{1}_{\{1\}}$, $f\chinastar g=g\chinastar f=g$;


(2) $f\neq \bm{1}_{\{1\}}$ and $g\neq \bm{1}_{\{1\}}$,
\begin{equation}\label{xin-operation-2}
(f\chinastar g)(t)=\begin{cases}
1, & t\in [0, \xi),\\
f^{R}(\xi)\wedge g^{R}(\xi), & t=\xi, \\
0, & t\in (\xi, 1],
\end{cases}
\end{equation}
where $\xi=\sup\{x\in I\mid f^{R}(x)=1\}\wedge \sup\{x\in I\mid g^{R}(x)=1\}$.
\end{definition}
\begin{proposition}
$\chinastar(\mathbf{L}^{2})\subseteq \mathbf{L}$.
\end{proposition}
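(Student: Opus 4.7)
The proof should be a routine case analysis following Definition~\ref{bingstar-operation-2}. In cases (1) and (2), where one of $f,g$ equals $\bm{1}_{\{1\}}$, we get $f\blacklozenge g\in\{f,g\}\subset \mathbf{L}$ by hypothesis, so nothing needs to be checked. All the substance is in case (3), where I must show that the piecewise function defined by (\ref{xin-operation-2}) is both normal and convex, hence lies in $\mathbf{L}$.

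For normality, I would first verify that $\xi:=\sup\{x\in I:f^{R}(x)=1\}\wedge\sup\{x\in I:g^{R}(x)=1\}$ is a well-defined element of $[0,1]$. Since $f,g\in \mathbf{N}$, both $f^{R}(0)$ and $g^{R}(0)$ equal $1$, so each of the defining sets contains $0$ and is nonempty, giving $\xi\in[0,1]$. I then split on whether $\xi>0$ or $\xi=0$. If $\xi>0$, formula (\ref{xin-operation-2}) gives $(f\blacklozenge g)(t)=1$ for every $t\in[0,\xi)$, so $\sup(f\blacklozenge g)=1$. If $\xi=0$, the interval $[0,\xi)$ is empty and the value at $t=\xi=0$ is $f^{R}(0)\wedge g^{R}(0)=1\wedge 1=1$, so again the supremum equals $1$.

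For convexity, the key observation is that $f\blacklozenge g$ is monotonically decreasing on $I$: it is constantly $1$ on $[0,\xi)$; it equals $v:=f^{R}(\xi)\wedge g^{R}(\xi)\in[0,1]$ at the single point $\xi$; and it is $0$ on $(\xi,1]$. Since $1\ge v\ge 0$, the three pieces fit together monotonically. For any decreasing $h:I\to I$ and any $0\le x\le y\le z\le 1$ one has $h(y)\ge h(z)\ge h(x)\wedge h(z)$, which is exactly the convexity inequality required by the definition of convex function given in the introduction. Combining this with the normality established above gives $f\blacklozenge g\in \mathbf{L}$.

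I do not anticipate any genuine obstacle: the formula (\ref{xin-operation-2}) is engineered precisely so that the output is a non-increasing step-type function with supremum $1$, and both axioms fall out directly. The only mildly delicate point is the boundary case $\xi=0$ for normality, which requires invoking the hypothesis $f,g\in \mathbf{N}$ to guarantee $f^{R}(0)\wedge g^{R}(0)=1$; note that unlike the analysis of $\bigstar$, Lemma~\ref{<-Lemma} is not needed here, since only a single threshold $\xi$ appears.
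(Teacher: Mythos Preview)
Your proof is correct. The paper states this proposition without proof, evidently regarding it as immediate from formula~(\ref{xin-operation-2}); your case analysis---handling the trivial cases (1)--(2), then checking normality (including the boundary case $\xi=0$ via $f^{R}(0)=g^{R}(0)=1$) and convexity via the observation that $f\blacklozenge g$ is non-increasing---is exactly the routine verification the paper omits.
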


\begin{theorem}\label{LR**-Thm}
For $f, g\in \mathbf{L}\backslash \left\{\bm{1}_{\{1\}}\right\}$,
\begin{align}
(f\chinastar g)^{L}(t)&\equiv 1, \\
(f\chinastar g)^{R}(t)&=
\begin{cases}
1, & t\in [0, \xi),\\
f^{R}(\xi)\wedge g^{R}(\xi), & t=\xi, \\
0, & t\in (\xi, 1],
\end{cases}
\end{align}
where $\xi=\sup\{x\in I\mid f^{R}(x)=1\}\wedge\sup\{x\in I\mid g^{R}(x)=1\}$.
\end{theorem}

\begin{proposition}\label{not-tr-norm}
The binary operation $\chinastar$ does not satisfy (O6).
\end{proposition}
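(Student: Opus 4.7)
\smallskip

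The plan is to refute (O6) by exhibiting an explicit pair of elements of $\mathbf{J}$ whose $\blacklozenge$-image lies outside of $\mathbf{J}$. Since (O6) requires $\blacklozenge$ to be closed on $\mathbf{J}=\{\bm{1}_{\{x\}}:x\in I\}$, a single counterexample suffices. I would pick a point $a\in(0,1)$ (for concreteness, $a=\tfrac12$) and set $f=g=\bm{1}_{\{a\}}$. Both lie in $\mathbf{J}\subset\mathbf{L}$ and neither equals $\bm{1}_{\{1\}}$, so the computation of $f\blacklozenge g$ falls squarely under case (3) of Definition~\ref{bingstar-operation-2}.

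Next I would carry out the direct computation of $f^{R}$. From the definition one checks $\bm{1}_{\{a\}}^{R}(x)=1$ for $x\in[0,a]$ and $\bm{1}_{\{a\}}^{R}(x)=0$ for $x\in(a,1]$, so that $\sup\{x\in I:\bm{1}_{\{a\}}^{R}(x)=1\}=a$. Therefore the parameter $\xi$ attached to the pair $(f,g)$ is $\xi=a\wedge a=a$, and in particular $\bm{1}_{\{a\}}^{R}(\xi)\wedge\bm{1}_{\{a\}}^{R}(\xi)=1$. Substituting into (\ref{xin-operation-2}) produces
\begin{equation*}
(f\blacklozenge g)(t)=
\begin{cases}
1, & t\in[0,a],\\
0, & t\in(a,1],
\end{cases}
\end{equation*}
that is, $f\blacklozenge g=\bm{1}_{[0,a]}$.

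To conclude, I would observe that since $a>0$, the set $[0,a]$ is not a singleton, so $\bm{1}_{[0,a]}\notin\mathbf{J}$. Hence $\blacklozenge$ carries a pair from $\mathbf{J}$ outside of $\mathbf{J}$, and axiom (O6) fails. There is essentially no obstacle here; the only subtlety worth noting is the requirement $a\in(0,1)$: taking $a=0$ would yield $\xi=0$ and collapse the output back to $\bm{1}_{\{0\}}\in\mathbf{J}$, while $a=1$ would force the computation into case (1) or (2) rather than case (3). With $a=\tfrac12$ both degeneracies are avoided, and the result is immediate.
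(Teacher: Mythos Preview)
Your proof is correct and follows essentially the same approach as the paper: exhibit characteristic functions of singletons in $(0,1)$ and compute directly from (\ref{xin-operation-2}) that their $\blacklozenge$-product is $\bm{1}_{[0,\xi]}\notin\mathbf{J}$. The only cosmetic difference is that the paper treats arbitrary $x_1,x_2\in(0,1)$ (obtaining $\bm{1}_{[0,x_1\wedge x_2]}$) while you specialize to $x_1=x_2=\tfrac12$, and you spell out the intermediate computation of $f^R$ and $\xi$ that the paper leaves implicit.
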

\begin{proof}
For $x_1$, $x_2\in (0, 1)$, from \eqref{xin-operation-2}, it follows that
\begin{align*}
&\quad (\bm{1}_{\{x_1\}}\chinastar \bm{1}_{\{x_2\}})(t)\\
&=
\begin{cases}
1, & x\in [0, x_1\wedge x_2], \\
0, & x\in (x_1\wedge x_2, 1],
\end{cases}\\
&=\bm{1}_{[0, x_1\wedge x_2]}\notin \mathbf{J}.
\end{align*}
This implies that $\chinastar$ does not satisfy (O6).
\end{proof}

Similarly to the proofs of Theorems 4.1--4.4 in \cite{WCW2022}, applying Theorem~\ref{LR**-Thm} and
Proposition~\ref{not-tr-norm} leads to the following result.
\begin{theorem}\label{t-norm}
The binary operation $\chinastar$ is a t-norm but not a t$_{r}$-norm on $\mathbf{L}$.
\end{theorem}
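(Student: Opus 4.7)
The plan is to verify the four basic axioms (O1)--(O4) for $\blacklozenge$ and then invoke Proposition~\ref{not-tr-norm} to rule out (O6). The preceding proposition already records $\blacklozenge(\mathbf{L}^2) \subset \mathbf{L}$, so the output lies in the intended algebra.

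Commutativity (O1) and the neutral element axiom (O3) are immediate: both the parameter $\xi = \sup\{x: f^R(x)=1\} \wedge \sup\{x: g^R(x)=1\}$ and the value $f^R(\xi) \wedge g^R(\xi)$ appearing in Definition~\ref{bingstar-operation-2} are symmetric in $f, g$, and clauses (1)--(2) of that definition install $\bm{1}_{\{1\}}$ as a two-sided neutral element by fiat.

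For monotonicity (O4), I would assume $f \sqsubseteq g$ and let $h \in \mathbf{L}$. By Theorem~\ref{order-theorem} this is equivalent to $g^L \leq f^L$ and $f^R \leq g^R$. Theorem~\ref{LR*-Thm} gives $(f \blacklozenge h)^L \equiv (g \blacklozenge h)^L \equiv 1$, so the first half of the order criterion is automatic. Writing $\xi_1 = \sup\{x: f^R(x)=1\} \wedge \sup\{x: h^R(x)=1\}$ and $\xi_2 = \sup\{x: g^R(x)=1\} \wedge \sup\{x: h^R(x)=1\}$, the inclusion $\{x: f^R(x)=1\} \subseteq \{x: g^R(x)=1\}$ forces $\xi_1 \leq \xi_2$, and a short case distinction on where $t$ lies relative to $\xi_1, \xi_2$ (combined with $f^R \leq g^R$) yields $(f \blacklozenge h)^R \leq (g \blacklozenge h)^R$; a final appeal to Theorem~\ref{order-theorem} gives $f \blacklozenge h \sqsubseteq g \blacklozenge h$. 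Subcases where some argument is $\bm{1}_{\{1\}}$ are trivial.

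The main technical step is associativity (O2). After dispatching the degenerate cases where some argument equals $\bm{1}_{\{1\}}$, a key preliminary is that $f \blacklozenge g \in \mathbf{L} \setminus \{\bm{1}_{\{1\}}\}$ whenever $f, g$ are; indeed $(f \blacklozenge g)(0) = 1$ follows either from $\xi > 0$ or, when $\xi = 0$, from normality of $f, g$, while $\bm{1}_{\{1\}}(0) = 0$. Setting $a_f = \sup\{x \in I : f^R(x) = 1\}$ and similarly $a_g, a_h$, a direct reading of Theorem~\ref{LR*-Thm} shows
\[
\sup\{x \in I : (f \blacklozenge g)^R(x) = 1\} \;=\; a_f \wedge a_g,
\]
regardless of whether $f^R(a_f \wedge a_g) \wedge g^R(a_f \wedge a_g)$ equals $1$ (the level set is either $[0, a_f \wedge a_g)$ or $[0, a_f \wedge a_g]$, both with the same supremum). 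Applying Theorem~\ref{LR*-Thm} a second time to $(f \blacklozenge g) \blacklozenge h$ and to $f \blacklozenge (g \blacklozenge h)$ then produces the same critical point $\xi^{\ast} = a_f \wedge a_g \wedge a_h$ on both sides by associativity of $\wedge$; a case analysis on which of $a_f, a_g, a_h$ realizes the minimum shows that the common value at $\xi^{\ast}$ is $f^R(\xi^{\ast}) \wedge g^R(\xi^{\ast}) \wedge h^R(\xi^{\ast})$, while the functions agree elsewhere by construction. I expect this bookkeeping to be the main obstacle, but it mirrors the analogous calculations A--D from \cite{WC-TFS} that the author cites. Combining (O1)--(O4) with Proposition~\ref{not-tr-norm} then yields the theorem.
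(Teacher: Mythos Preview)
Your proposal is correct and follows essentially the same route the paper indicates: the paper's own proof is just the line ``Similarly to the proofs of A--D in \cite{WC-TFS}, applying Theorem~\ref{LR*-Thm} and Proposition~\ref{not-tr-norm} leads to the following result,'' and your sketch unpacks exactly that, verifying (O1)--(O4) via the $L$/$R$ description of $\blacklozenge$ in Theorem~\ref{LR*-Thm} and Theorem~\ref{order-theorem}, and invoking Proposition~\ref{not-tr-norm} for the failure of (O6). Your handling of the key technical points (that $f\blacklozenge g\neq\bm{1}_{\{1\}}$ when $f,g\neq\bm{1}_{\{1\}}$, that $\sup\{x:(f\blacklozenge g)^R(x)=1\}=a_f\wedge a_g$, and the case analysis reducing the value at $\xi^\ast$ to $f^R(\xi^\ast)\wedge g^R(\xi^\ast)\wedge h^R(\xi^\ast)$) is sound.
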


\section{Some further results on the binary operations $\widetilde{\Delta}_{\ast}$
and $\widetilde{\nabla}_{\ast}$}

The following lemma, originated from \cite[Proposition~1.19]{KMP2000},
shows that the continuity in one component is sufficient for the continuity
of t-norms.

\begin{lemma}[{\textrm{\protect\cite[Proposition~1.19]{KMP2000}}}]\label{continuous-lemma}
A binary operation $T: I^2\rightarrow I$, which satisfies (T3), is continuous
if and only if it is continuous in each component, i.e., for all $x_0$, $y_0\in I$,
both the vertical section $T(x_0, \_): I\rightarrow I$ and the horizontal section
$T(\_, y_0): I\rightarrow I$ are continuous functions in one variable.
\end{lemma}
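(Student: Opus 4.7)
The ``only if'' direction is immediate, so the content lies in the ``if'' direction: assuming $T$ satisfies (T3) and is separately continuous, I need to derive joint continuity. My plan is a standard sandwich argument: bracket $T(x,y)$ between two monotone-comparable values using (T3), and then force both endpoints of the bracket to be close to $T(x_0,y_0)$ by invoking separate continuity at several carefully chosen points --- crucially, not only at $(x_0, y_0)$ itself but also at auxiliary nearby horizontal slices.

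Concretely, fix an interior point $(x_0, y_0) \in (0,1)^2$ and $\epsilon > 0$. First, using continuity of the vertical section $T(x_0, \_)$ at $y_0$, pick $y_1 < y_0 < y_2$ in $I$ with
$$T(x_0, y_2) - T(x_0, y_1) < \epsilon/2.$$
Second, using continuity of the horizontal sections $T(\_, y_1)$ and $T(\_, y_2)$ at $x_0$, pick $x_1 < x_0 < x_2$ satisfying
$$T(x_0, y_1) - T(x_1, y_1) < \epsilon/4, \qquad T(x_2, y_2) - T(x_0, y_2) < \epsilon/4.$$
Set $\delta = \min\{x_0 - x_1,\, x_2 - x_0,\, y_0 - y_1,\, y_2 - y_0\} > 0$. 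For any $(x,y)$ with $|x - x_0| < \delta$ and $|y - y_0| < \delta$, monotonicity (T3) gives
$$T(x_1, y_1) \leq T(x,y) \leq T(x_2, y_2),$$
and chaining the three displayed bounds via the triangle inequality yields $|T(x,y) - T(x_0,y_0)| < \epsilon$.

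Boundary cases ($x_0$ or $y_0$ in $\{0,1\}$) are handled by collapsing one side of the sandwich: if, say, $y_0 = 0$, drop $y_1$ and use only the upper estimate, since (T3) automatically gives $T(x,y) \geq T(x,0)$ for $y \geq 0$. No single step is technically deep --- the calculation is elementary --- but the one conceptual point that should not be missed is that separate continuity at $(x_0, y_0)$ alone does not close the argument: one truly needs the horizontal sections $T(\_, y_1)$ and $T(\_, y_2)$ at the \emph{displaced} levels $y_1, y_2$ to be continuous at $x_0$, which is exactly why the hypothesis asks for continuity of \emph{every} section rather than merely the two sections through $(x_0, y_0)$.
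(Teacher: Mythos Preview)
Your argument is correct and is essentially the standard proof that a separately continuous function which is monotone in each variable is jointly continuous. The sandwich estimate is set up properly: the key observation that one must invoke continuity of the horizontal sections at the \emph{displaced} levels $y_1$ and $y_2$ (not only through $(x_0,y_0)$) is exactly right, and the triangle-inequality chain closes with margin $3\epsilon/4$. The boundary discussion is terse but salvageable: when $y_0=0$ one takes $y_1=0$ and still chooses $x_1<x_0$ so that the lower bracket becomes $T(x_1,0)$, which is controlled by continuity of $T(\_,0)$ at $x_0$; analogously at the other edges and corners.

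As for comparison with the paper: there is nothing to compare. The paper does not supply its own proof of this lemma; it is quoted verbatim as \cite[Proposition~1.19]{KMP2000} and used as a black box in the implication (2)$\Rightarrow$(3) of Theorem~\ref{O5}. Your write-up therefore goes beyond what the paper itself provides, and it matches the classical argument one finds in the cited source.
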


\begin{proposition}
[{\textrm{\protect\cite[Lemma~3.1]{WCW2022}}}]
\label{1-Lemma}
\begin{enumerate}[{\rm (1)}]
\item Let $\ast$ be a t-norm on $I$. Then, $x\ast y=1$ if and only if $x=y=1$.
\item Let $\ast$ be a t-conorm on $I$. Then, $x\ast y=0$ if and only if $x=y=0$.
\end{enumerate}
\end{proposition}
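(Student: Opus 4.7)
The plan is to prove both parts using only the monotonicity axiom (T3) together with the neutral element axioms (T4) and (T4'), so the argument is essentially a one-line squeeze in each direction.

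For part (1), the forward implication $x=y=1 \Longrightarrow x\star y = 1$ is immediate from (T4), since $1 \star 1 = 1$. For the converse, I would assume $x\star y = 1$ and use monotonicity to squeeze each coordinate. Specifically, since $x \leq 1$, axiom (T3) gives $x\star y \leq 1 \star y$, and (T4) rewrites the right-hand side as $y$. Combining, $1 = x\star y \leq y \leq 1$, forcing $y = 1$. By commutativity (T1), or by the symmetric argument using $y \leq 1$ and $x \star 1 = x$, one also gets $x = 1$.

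For part (2), the argument is the dual. The forward direction uses (T4') to give $0 \star 0 = 0$. For the converse, assuming $x \star y = 0$, I would exploit $0 \leq x$ and (T3) to write $0 \star y \leq x \star y = 0$, then apply (T4') to get $y \leq 0$, hence $y = 0$. Symmetrically $x = 0$.

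I expect no real obstacle here: the proof is purely axiomatic, requires nothing from the preceding sections about $\mathbf{L}$, convolutions, or Lemma~\ref{continuous-lemma}, and hinges only on the interplay between monotonicity and the boundary values $0$ and $1$ serving as neutral elements. The only thing worth flagging is that the two parts are completely parallel via the order-reversing symmetry between $t$-norms and $t$-conorms, so presenting (1) in detail and remarking that (2) is analogous is sufficient.
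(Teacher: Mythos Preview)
Your proof is correct and is precisely the standard axiomatic argument. The paper states this proposition without proof, treating it as an immediate consequence of the $t$-norm and $t$-conorm axioms; your write-up supplies exactly that routine verification.
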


\begin{proposition}\label{product}
Let $\Delta$ be a t-norm on $I$ and $\ast$ be a binary operation on $I$ satisfying that
$0\ast 0=0\ast 1=1\ast 0=0$. If $\widetilde{\Delta}_{\ast}$ satisfies axiom (O6), then, for
$x_1$, $x_2\in I$, we have $1\ast 1=1$ and $\bm{1}_{\{x_1\}}\widetilde{\Delta}_{\ast}
\bm{1}_{\{x_2\}}=\bm{1}_{\{x_1\Delta x_2\}}$.
\end{proposition}

\begin{proof}
From $0\ast 0=0\ast 1=1\ast 0=0$, it follows that
\begin{itemize}
  \item[(a)] for $y$, $z\in I$, $\bm{1}_{\{x_1\}}(y)\ast \bm{1}_{\{x_2\}}(z)\in\{0, 1\ast 1\}$;
  \item[(b)] $\bm{1}_{\{x_1\}}(y)\ast \bm{1}_{\{x_2\}}(z)=1\ast 1$ if and only if $y=x_1$ and $z=x_2$.
\end{itemize}
By $(\bm{1}_{\{x_1\}}\widetilde{\Delta}_{\ast} \bm{1}_{\{x_2\}})(x)
=\sup\{\bm{1}_{\{x_1\}}(y)\ast \bm{1}_{\{x_2\}}(z)\mid y\Delta z=x\}$, we have
$$
(\bm{1}_{\{x_1\}}\widetilde{\Delta}_{\ast} \bm{1}_{\{x_2\}})(x)=
\begin{cases}
1\ast 1, & x=x_1\Delta x_2, \\
0, & x\in I\backslash\{x_1\Delta x_2\}.
\end{cases}
$$
Since $\widetilde{\Delta}_{\ast}$ satisfies axiom (O6), we get $1\ast 1=1$ and
$\bm{1}_{\{x_1\}}\widetilde{\Delta}_{\ast} \bm{1}_{\{x_2\}}=\bm{1}_{\{x_1\Delta x_2\}}.$
\end{proof}

\begin{theorem}\label{O5}
Let $\ast$ be a binary operation on $I$ and $\Delta$ be a t-norm on $I$.
Then, the following statements are equivalent:
\begin{enumerate}[{\rm (1)}]
  \item $\widetilde{\Delta}_{\ast}$ satisfies axiom (O5);
  \item $1\ast 0=0$, $1\ast 1=1$, and the function $\Delta$$(\_, b)$ is continuous for all $b\in I$;
  \item $1\ast 0=0$, $1\ast 1=1$, and $\Delta$ is a continuous t-norm.
\end{enumerate}
\end{theorem}
\begin{proof}
(1) $\Longrightarrow$ (2).

\medskip

(a) $1\ast 0=0$.

\medskip

  Suppose, on the contrary, that $1\ast 0>0$, and fix a closed interval $[0, 0.5]\subseteq I$.
  For $z\in [0, 1]\backslash [0, 0.5]=(0.5, 1]$, since $\widetilde{\Delta}_{\ast}$ satisfies axiom (O5)
  and $1\Delta z=z$, it follows from the definition of $\widetilde{\Delta}_{\ast}$ that
  \begin{align*}
  \bm{1}_{[0, 0.5]}(z)
  &=(\bm{1}_{[0, 1]}\widetilde{\Delta}_{\ast} \bm{1}_{[0, 0.5]})(z)\\
  &\geq \bm{1}_{[0, 1]}(1)\ast \bm{1}_{[0, 0.5]}(z)\\
  &=1\ast 0>0,
  \end{align*}
  implying that $\bm{1}_{[0, 0.5]}(z)=1$. This means that $z\in [0, 0.5]$,
  which contradicts with $z\in (0.5, 1]$.

  \medskip

  (b) $1\ast 1=1$.

\medskip

  Clearly, $1=\bm{1}_{[0, 0.5]}(0)=
  (\bm{1}_{[0, 1]}\widetilde{\Delta}_{\ast} \bm{1}_{[0, 0.5]})(0)\in \{1\ast 0, 1\ast 1\}=\{0, 1\ast 1\}$. This implies that $1\ast 1=1$.

  \medskip

  (c) $\Delta$$(\_, b)$ is continuous for all $b\in I$.

  \medskip

  Clearly, both $\Delta$$(\_, 0)$ and $\Delta$$(\_, 1)$ are continuous.
  For $b\in I$, since $\bm{1}_{[0, 1]}(y)=1$ and $\bm{1}_{\{b\}}(z)\in \{0, 1\}$
  for $y$, $z\in I$, it follows from (b) that, for $y\in I$,
  \begin{equation}\label{32-1}
  (\bm{1}_{[0, 1]}\widetilde{\Delta}_{\ast} \bm{1}_{[b, b]})(y\Delta b)=1.
  \end{equation}
  For $x\in I\backslash \{y\Delta b\mid y\in I\}$, if $y\Delta z=x$, then $z\neq b$.
  This, together with (a), implies that
  \begin{equation}\label{32-2}
  (\bm{1}_{[0, 1]}\widetilde{\Delta}_{\ast} \bm{1}_{[b, b]})(x)
  =\sup\{\bm{1}_{[0, 1]}(y)\ast \bm{1}_{[b, b]}(z)\mid
  y\Delta z=x\}=0.
  \end{equation}
  Since $\widetilde{\Delta}_{\ast}$ satisfies axiom (O5), applying \eqref{32-1} and \eqref{32-2}
  yields that
  $$
  \bm{1}_{[0, b]}=\bm{1}_{[0, 1]}\widetilde{\Delta}_{\ast} \bm{1}_{[b, b]}=\bm{1}_{\{y\Delta z\mid y\in I, z=b\}},
  $$
  i.e.,
  \begin{equation}\label{Ib}
  [0, b]=\{y\Delta z\mid y\in I, z=b\}=I\Delta b.
  \end{equation}

\begin{claim}\label{claim-4}
$\Delta$$(\_, b)$ is continuous for all $b\in (0, 1)$.
\end{claim}

  Suppose, on the contrary, that $\Delta$$(\_, b)$ is not continuous for some $b\in (0, 1)$.
  Then, there exists some $z\in (0, 1)$ such that $\Delta$$(\_, b)$ is not continuous at $z$.
  Consider the following three cases:

  Case 1. $z=0$. Since $\Delta$$(\_, b)$ is increasing, the right-limit of
  $\Delta$$(\_, b)$ at $0$ exists. Let $\xi=\lim_{x\to 0^+}$$\Delta$$(x, b)$. Clearly,
  $\xi>$$\Delta$$(0, b)=0$, since $\Delta$$(x, b)$ is not continuous at $0$.
  Since $\Delta$$(\_, b)$ is increasing, according to~\eqref{Ib}, we get
  \begin{align*}
  &\quad [0, b]=I\Delta b\\
  &= \{0\Delta b\}\cup \{y\Delta b\mid y\in (0, 1]\}\\
  &\subseteq \{0\} \cup [\xi, 1\Delta b]\\
  &=\{0\} \cup [\xi, b]\\
  &\subsetneqq [0, b],
  \end{align*}
  which is a contradiction.

  Case 2. $z\in (0, 1)$. Since $\Delta$$(\_, b)$ is increasing, both the left-limit and the
  right-limit of $\Delta$$(\_, b)$ at $z$ exist. Let $\eta=\lim_{x\to z^-}$$\Delta$$(x, b)$
  and $\xi=\lim_{x\to z^+}$$\Delta$$(x, b)$. Clearly, $\eta<\xi$, since $\Delta$$(x, b)$ is
  not continuous at $z$. This, together with (\ref{Ib}) and the fact that $\Delta$$(\_, b)$ is
  increasing, implies that
  \begin{align*}
  &\quad [0, b]=I\Delta b\\
  &= \{y\Delta b\mid y\in [0, z)\}\cup \{z\Delta b\}\cup \{y\Delta b\mid y\in (z, 1]\}\\
  &\subseteq [0 \Delta b, \eta] \cup \{z\Delta b\}
  \cup [\xi, 1\Delta b]\\
  &=[0, \eta] \cup \{z\Delta b\}\cup [\xi, b]\\
  &\subsetneqq [0, b],
  \end{align*}
which is a contradiction.

Case 3. $z=1$. Similarly to the proof of Case 1, it can be verified that this is true.

\medskip

(2) $\Longrightarrow$ (3). Since $\Delta$ is commutative and satisfies (T3), by applying
Lemma~\ref{continuous-lemma}, this can be verified immediately.

\medskip

(3) $\Longrightarrow$ (1). For a closed interval $[a, b]\subseteq I$, similarly to the proofs
of \eqref{32-1} and \eqref{32-2}, it follows from $1\ast 0=0$ and $1\ast 1=1$ that
\begin{equation}\label{[0,1]*[a,b]}
\bm{1}_{[0, 1]}\widetilde{\Delta}_{\ast} \bm{1}_{[a, b]}=\bm{1}_{\{y\Delta z\mid y\in [0, 1], z\in [a, b]\}}
=\bm{1}_{I\Delta [a, b]}.
\end{equation}
Since $\Delta$ is a continuous t-norm, it can be verified that $[0, b]=I\Delta b\subseteq I\Delta [a, b]
\subseteq [0\Delta a, 1\Delta b]=[0, b].$ This, together with (\ref{[0,1]*[a,b]}), implies that
$\bm{1}_{[0, 1]}\widetilde{\Delta}_{\ast} \bm{1}_{[a, b]}=\bm{1}_{[0, b]}.$
\end{proof}

\begin{theorem}\label{O6}
Let $\ast$ be a binary operation on $I$ and $\Delta$ be a t-norm on $I$.
Then, the following statements are equivalent:
\begin{enumerate}[{\rm (1)}]
  \item $\widetilde{\Delta}_{\ast}$ satisfies axiom (O6);
  \item $0\ast 0=0\ast 1=1\ast 0=0$, $1\ast 1=1$;
  \item for $x_1$, $x_2\in I$, $\bm{1}_{\{x_1\}}\widetilde{\Delta}_{\ast}\bm{1}_{\{x_2\}}=\bm{1}_{\{x_1\Delta x_2\}}$.
\end{enumerate}
\end{theorem}
\begin{proof}
(2) $\Longrightarrow$ (3). For $x_1$, $x_2\in I$, from (2), it follows that
$\{\bm{1}_{\{x_1\}}(y), \bm{1}_{x_2}(z)\mid y, z\in I\}
\subseteq \{0, 1\}$. This, together with (2), implies that
\begin{align*}
&\quad(\bm{1}_{\{x_1\}}\widetilde{\Delta}_{\ast} \bm{1}_{\{x_2\}})(x)\\
&=\sup\{\bm{1}_{\{x_1\}}(y)\ast
\bm{1}_{\{x_2\}}(z)\mid y\Delta z=x\}\\
&=\begin{cases}
0, & x\neq x_1\Delta x_2, \\
1, & x=x_1\Delta x_2,
\end{cases}
\end{align*}
i.e.,
$$
\bm{1}_{\{x_1\}}\widetilde{\Delta}_{\ast} \bm{1}_{\{x_2\}}=
\bm{1}_{\{x_1\Delta x_2\}}.
$$

(3) $\Longrightarrow$ (1). This holds trivially.

(1) $\Longrightarrow$ (2). Fix $x_1\in I$. It is clear that $\bm{1}_{\{x_1\}}\widetilde{\Delta}_{\ast} \bm{1}_{\{0.5\}}\in \mathbf{J}$.

\begin{claim}\label{claim-5}
$0\ast 0=0$.
\end{claim}

  Suppose, on the contrary, that $0\ast 0>0$, and fix $x_1\in I$. From $\bm{1}_{\{x_1\}}\widetilde{\Delta}_{\ast} \bm{1}_{\{0.5\}}\in \mathbf{J}$,
  it follows that there exists $a\in I$ such that $\bm{1}_{\{a\}}=\bm{1}_{\{x_1\}}\widetilde{\Delta}_{\ast} \bm{1}_{\{0.5\}}$.
  This implies that, for $x\in I\backslash\{a, x_1\}$,
  \begin{align*}
  0=&\bm{1}_{\{a\}}(x)=(\bm{1}_{\{x_1\}}\widetilde{\Delta}_{\ast} \bm{1}_{\{0.5\}})(x)\\
  =&\sup\{\bm{1}_{\{x_1\}}(y)\ast \bm{1}_{\{0.5\}}(z)\mid y\Delta z=x\}\\
  \geq & \bm{1}_{\{x_1\}}(x)\ast \bm{1}_{\{0.5\}}(1) \ (\text{by } x\Delta 1=x)\\
  =& 0\ast 0>0,
  \end{align*}
which is a contradiction. 

\begin{claim}\label{claim-6}
$0\ast 1=0$.
\end{claim}

Suppose, on the contrary, that $0\ast 1>0$, and fix $x_{1}\in I$. From
$\bm{1}_{\{x_1\}}\widetilde{\Delta}_{\ast} \bm{1}_{\{1\}}\in \mathbf{J}$, it follows that there
exists $a\in I$ such that $\bm{1}_{\{a\}}=\bm{1}_{\{x_1\}}\widetilde{\Delta}_{\ast} \bm{1}_{\{1\}}$.
This implies that, for $x\in I\backslash \{a, x_1\}$,
\begin{align*}
0&=\bm{1}_{\{a\}}(x)=(\bm{1}_{\{x_1\}}\widetilde{\Delta}_{\ast} \bm{1}_{\{1\}})(x)\\
&=\sup\{\bm{1}_{\{x_1\}}(y)\ast \bm{1}_{\{1\}}(z)\mid y\Delta z=x\}\\
&\geq \bm{1}_{\{x_1\}}(x)\ast \bm{1}_{\{1\}}(1) \ (\text{by } x\Delta 1=x)\\
&=0\ast 1>0,
\end{align*}
which is a contradiction.

Similarly, the following claims can be verified.

\begin{claim}\label{claim-7}
$1\ast 0=0$.
\end{claim}

\begin{claim}\label{claim-8}
$1\ast 1=1$.
\end{claim}

Applying Proposition~\ref{product} and Claims \ref{claim-5}--\ref{claim-8}, this holds trivially.
\end{proof}

\begin{corollary}\label{O5-O6}
Let $\ast$ be a binary operation on $I$ and $\Delta$ be a t-norm on $I$.
If $\widetilde{\Delta}_{\ast}$ satisfies axioms (O5) and (O6), then it satisfies axiom (O7).
\end{corollary}
\begin{proof}
Take two closed intervals $[a_1, b_1]$, $[a_2, b_2]\subseteq I$.
Applying Theorems~\ref{O5} and \ref{O6}, it can be verified that
$$
\bm{1}_{[a_1, b_1]}\widetilde{\Delta}_{\ast} \bm{1}_{[a_2, b_2]}
=\bm{1}_{\left\{x\Delta y\mid x\in [a_1, b_1], y\in [a_2, b_2]\right\}}
=\bm{1}_{[a_1, b_1]\Delta [a_2, b_2]}.
$$
Noting that $[a_1, b_1]$$\times$$[a_2, b_2]$ is a compact and connected subset of $\mathbb{R}^2$
and $\Delta$ is continuous, we know that $[a_1, b_1]\Delta [a_2, b_2]$ is a compact and connected
subset of $\mathbb{R}$. Since $\Delta$ is increasing in each argument, we have
$$
[a_1, b_1]\Delta [a_2, b_2]=[a_1\Delta a_2, b_1\Delta b_2].
$$
Therefore,
$$
\bm{1}_{[a_1, b_1]}\widetilde{\Delta}_{\ast} \bm{1}_{[a_2, b_2]}=\bm{1}_{[a_1\Delta a_2, b_1\Delta b_2]}\in \mathbf{K}.
$$
\end{proof}

\begin{lemma}[{\textrm{\protect \cite[Theorem~2]{WC-TFS}}}]
\label{Thm-20}
Let $\Delta$ be a continuous t-norm on $I$ and $\ast$ be a binary operation on $I$.
If $\widetilde{\Delta}_{\ast}$ is a t$_{r}$-norm on $\mathbf{L}$, then $\ast$ is a t-norm.
\end{lemma}

\begin{lemma}[{\textrm{\protect \cite[Theorem~3]{WC-TFS}}}]
\label{Thm-21}
Let $\nabla$ be a continuous t-conorm on $I$ and $\ast$ be a binary operation on $I$.
If $\widetilde{\nabla}_{\ast}$ is a t$_{r}$-conorm on $\mathbf{L}$, then $\ast$ is a t-norm.
\end{lemma}

\begin{theorem}\label{Tr-Norm-Norm}
Let $\ast$ be a binary operation on $I$ and $\Delta$ be a t-norm on $I$.
If the binary operation $\widetilde{\Delta}_{\ast}$ is a t$_{r}$-norm on $\mathbf{L}$, then
$\Delta$ is a continuous t-norm and $\ast$ is a t-norm.
\end{theorem}
\begin{proof}
Since $\widetilde{\Delta}_{\ast}$ is a t$_{r}$-norm on $\mathbf{L}$, from Theorem~\ref{O5},
it follows that $\Delta$ is a continuous t-norm. This, together with Lemma~\ref{Thm-20}, implies that
$\ast$ is a t-norm on $I$.
\end{proof}

Applying Lemma~\ref{Thm-21},
some slight changes in the proofs of Proposition~\ref{product},
Theorems~\ref{O5}, \ref{O6}, and \ref{Tr-Norm-Norm},
and Corollary~\ref{O5-O6} lead to the following.

\begin{proposition}\label{product-2}
Let $\nabla$ be a t-conorm on $I$ and $\ast$ be a binary operation on $I$ satisfying that
$0\ast 0=0\ast 1=1\ast 0=0$. If $\widetilde{\nabla}_{\ast}$ satisfies axiom (O6), then, for $x_1$, $x_2\in I$,
we have $1\ast 1=1$ and $\bm{1}_{\{x_1\}}\widetilde{\nabla}_{\ast} \bm{1}_{\{x_2\}}=\bm{1}_{\{x_1\nabla x_2\}}$.
\end{proposition}

\begin{theorem}\label{O5'}
Let $\ast$ be a binary operation on $I$ and $\nabla$ be a t-conorm on $I$.
Then, the following statements are equivalent:
\begin{enumerate}[{\rm (1)}]
  \item $\widetilde{\nabla}_{\ast}$ satisfies axiom (O5');
  \item $1\ast 0=0$, $1\ast 1=1$, and the function $\nabla$$(\_, b)$ is continuous for all $b\in I$;
  \item $1\ast 0=0$, $1\ast 1=1$, and $\nabla$ is a continuous t-conorm.
\end{enumerate}
\end{theorem}

\begin{theorem}\label{O6'}
Let $\ast$ be a binary operation on $I$ and $\nabla$ be a t-conorm on $I$.
Then, the following statements are equivalent:
\begin{enumerate}[{\rm (1)}]
  \item $\widetilde{\nabla}_{\ast}$ satisfies axiom (O6);
  \item $0\ast 0=0\ast 1=1\ast 0=0$, $1\ast 1=1$;
  \item for $x_1$, $x_2\in I$, $\bm{1}_{\{x_1\}}\widetilde{\nabla}_{\ast}\bm{1}_{\{x_2\}}=\bm{1}_{\{x_1\nabla x_2\}}$.
\end{enumerate}
\end{theorem}

\begin{corollary}\label{O5'-O6}
Let $\ast$ be a binary operation on $I$ and $\nabla$ be a t-conorm on $I$.
If $\widetilde{\nabla}_{\ast}$ satisfies axioms (O5') and (O6), then it satisfies axiom (O7).
\end{corollary}

\begin{theorem}\label{Tr-Conorm-Conorm}
Let $\ast$ be a binary operation on $I$ and $\nabla$ be a t-conorm on $I$.
If the binary operation $\widetilde{\nabla}_{\ast}$ is a t$_{r}$-conorm on $\mathbf{L}$, then
$\nabla$ is a continuous t-conorm and $\ast$ is a t-norm.
\end{theorem}

\section{Conclusion}
The notions of t-norm, t$_{r}$-norm, and t$_{lor}$-norm on $\mathbf{L}$ were
introduced by Walker and Walker~\cite{WW2006} in 2006 and modified by Hern\'{a}ndez et al.~\cite{HCT2015}.
This paper first proves that the non-convolution binary operation $\hollowstar$ constructed by Wu
et al.~\cite{WCW2022} satisfies the distributivity law for meet-convolution and shows that t$_{lor}$-norm on
$\mathbf{L}$ is strictly stronger than t$_{r}$-norm on $\mathbf{L}$, which is strictly stronger than
t-norm on $\mathbf{L}$. Moreover, some equivalent conditions on restrictive axioms (O5), (O5'), and (O6)
for the binary operations $\widetilde{\Delta}_{\ast}$ and $\widetilde{\nabla}_{\ast}$ are obtained.
As corollaries, it is proved that
\begin{itemize}
\item[(1)] For the binary operation $\widetilde{\Delta}_{\ast}$, axioms (O5) and (O6) imply axiom (O7);
\item[(2)] For the binary operation $\widetilde{\nabla}_{\ast}$, axioms (O5') and (O6) imply axiom (O7).
\end{itemize}
Furthermore, it is proved that, if the binary operation ${\widetilde{\Delta}_{\ast}}$ is a t$_{r}$-norm (resp., ${\widetilde{\nabla}_{\ast}}$
is a t$_{r}$-conorm) on $\mathbf{L}$, then ${\Delta}$ is a continuous t-norm (resp.,
${\nabla}$ is a continuous t-conorm) on $I$, and ${\ast}$ is a t-norm on $I$.

\section*{References}

\end{document}